\documentclass[letterpaper, reqno,11pt]{article}
\usepackage[margin=1.0in]{geometry}
\usepackage{color,latexsym,amsmath,amssymb,path, amsthm}

\newtheorem{theorem}{Theorem}[section]
\newtheorem{conjecture}{Conjecture}[section]
\newtheorem{lemma}{Lemma}[section]
\newtheorem{corollary}{Corollary}[section]

\theoremstyle{remark}

\newtheorem{rem}{Remark}

\newcommand{\RR}{\mathbb{R}}

\newcommand{\CC}{\mathbb{C}}
\newcommand{\BZ}{\mathbf{Z}}

\newcommand{\pts}{\mathcal P}
\newcommand{\lines}{\mathcal L}

\newcommand{\sing}{\operatorname{sing}}
\newcommand{\reg}{\operatorname{reg}}

\newcommand{\itemizeEqnVSpacing}{\rule{0pt}{1pt}\vspace*{-14pt}}
\begin{document}
\pagenumbering{arabic}

\title{A note on rich lines in truly high dimensional sets}
\author{
Joshua Zahl\thanks{Massachusetts Institute of Technology, Cambridge, MA,  {\sl jzahl@mit.edu}.}
}

\date{\today}

\maketitle
\begin{abstract}
We modify an argument of Hablicsek and Scherr to show that if a collection of points in $\mathbb{C}^d$ spans many $r$--rich lines, then many of these lines must lie in a common $(d-1)$--flat. This is closely related to a previous result of Dvir and Gopi.
\end{abstract}

\renewcommand{\thefootnote}{\fnsymbol{footnote}} 
\footnotetext{\emph{MSC 2010 classification} 52C35, \emph{secondary} 52C10.}     
\renewcommand{\thefootnote}{\arabic{footnote}} 

\section{Introduction}
This note shows that the techniques of Hablicsek and Scherr from \cite{HS} can be extended from $\RR^d$ to $\CC^d$, with an $\epsilon$ loss in the exponent. In \cite{DG15}, Dvir and Gopi proved a new upper bound on the number of $r$--rich lines in ``truly'' $d$--dimensional configurations of points in $\CC^d$. Given a collection of points in $\CC^d$, Dvir and Gopi proved that either many of these points lie on a $(d-1)$--flat (i.e.~a $(d-1)$--dimensional affine subspace), or the points span few $r$--rich lines. Specifically, they established the following result.
 
\begin{theorem}[Dvir, Gopi]\label{DGTheorem}
For all $d \geq 1$, there exist constants $c_d,C_d$ such that the following holds. Let $\pts\subset\CC^d$ be a set of $n$ points, let $r\geq 2$ be an integer, and let $\mathcal{L}_r(\pts)$ be the set of lines that are incident to at least $r$ points from $\pts$. Suppose that for some $\alpha\geq 1$,
 \begin{equation*}
 |\mathcal{L}_r(\pts)|\geq C_d\cdot\alpha\cdot\frac{n^2}{r^d}.
 \end{equation*}
 Then there exists a subset $\pts^\prime\subset \pts$ of size at least $c_d \cdot\alpha\cdot \frac{n}{r^{d-2  }}$  contained in a $(d-1)$--flat.
 \end{theorem}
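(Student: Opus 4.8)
The plan is to prove Theorem~\ref{DGTheorem} by induction on the dimension $d$, following the polynomial--method strategy of Hablicsek and Scherr that this note adapts to $\CC$. The cases $d\le 2$ are vacuous once the constants $C_d$ are taken large enough: the trivial bound $|\mathcal{L}_r(\pts)|\le\binom n2/\binom r2<2n^2/r^2$ already contradicts the hypothesis $|\mathcal{L}_r(\pts)|\ge C_d\alpha n^2/r^d$ when $d\le 2$, $\alpha\ge 1$ and $C_d\ge 2$. For $d\ge 3$ I would prove the contrapositive: assuming that \emph{every} $(d-1)$--flat contains fewer than $c_d\alpha n/r^{d-2}$ points of $\pts$, I would show $|\mathcal{L}_r(\pts)|<C_d\alpha n^2/r^d$.

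The first step uses polynomial partitioning to confine the rich lines to a hypersurface of controlled degree. Identifying $\CC^d$ with $\RR^{2d}$ and running an iterated polynomial partition in the manner of Guth and Katz, I would obtain either the bound $|\mathcal{L}_r(\pts)|<C_d\alpha n^2/r^d$ directly, or a nonzero polynomial $Q$ of degree $D<r$ such that all but a negligible fraction of the $r$--rich lines lie in the complex hypersurface $Z=Z(Q)$; the mechanism is that an $r$--rich line must either be absorbed into the zero set of the partitioning polynomial or else draw its $\ge r$ incidences from the (too sparse) cells. After discarding the exceptional lines and restricting $\pts$ to the union of the survivors, I may assume $\pts\subset Z$.

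The second step analyzes $r$--rich lines lying on a hypersurface $Z\subset\CC^d$ of degree $D<r$ through the decomposition of $Z$ into irreducible components. If a positive fraction of the rich lines lie in hyperplane components --- of which there are at most $D<r$ --- then one such hyperplane $H$ already carries many rich lines, and an incidence bound inside $H$ (the Szemer\'edi--Trotter theorem over $\CC$ when $d=3$, the inductive hypothesis when $d\ge 4$) forces $|\pts\cap H|$ to exceed $c_d\alpha n/r^{d-2}$; since $H$ is a $(d-1)$--flat this contradicts the assumption. In the complementary case the rich lines concentrate on components $W$ of degree $\ge 2$, where they are rigid: using the smooth locus $Z_{\smooth}$, the singular locus $Z_{\sing}$, the regular locus $Z_{\reg}$, and the ruling/flecnodal geometry of such a $W$, one confines the lines on $W$ to boundedly many proper subvarieties and, after a generic slicing down to $\CC^{d-1}$, applies the inductive hypothesis there to again produce a $(d-1)$--flat with more than $c_d\alpha n/r^{d-2}$ points of $\pts$. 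Either branch contradicts the assumption, which gives the theorem.

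I expect the main difficulty to be the second step: pinning down how many $r$--rich lines a curved component can support over $\CC$, and organizing the double induction (on $d$ and on the component degrees) so that the passage from $\CC^d$ to $\CC^{d-1}$ does not leak a power of $r$. This is also where working over $\CC$ rather than $\RR$ bites --- the partitioning polynomial is only real, so one must pass between its real zero set, where the partition lives, and a complex hypersurface containing the lines --- and, consistent with the introduction, it may well be that the polynomial method yields only an $\epsilon$--loss in the exponent, with the sharp form of Theorem~\ref{DGTheorem}, with the clean constants $c_d$ and $C_d$, instead requiring the design--matrix and rank argument of Dvir and Gopi.
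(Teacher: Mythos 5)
The paper does not prove Theorem~\ref{DGTheorem}; it is quoted as a result of Dvir and Gopi from \cite{DG15}, and their argument is a linear--algebraic one built around design matrices and a rank lower bound, not polynomial partitioning. Your proposal instead mirrors the polynomial--method strategy the present paper uses for its \emph{own} Theorem~\ref{mainThm}, and that strategy is already known --- and explicitly stated in this paper --- to lose an $\epsilon$ in the exponent and to recover the clean Dvir--Gopi threshold only when $r\geq n^{\epsilon_0}$ (the ``Cheap Dvir--Gopi'' corollary). So the route you sketch, even if every step were filled in, would not yield the stated theorem with absolute constants $c_d,C_d$ for small $r$; your closing paragraph concedes exactly this, which is an honest assessment but also an admission that the proposal is not a proof of the statement as written.

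Beyond that high--level mismatch there are concrete gaps. Your Step~1 partitions in $\RR^{2d}$ and then treats the output as a complex hypersurface $Z\subset\CC^d$ of degree $D<r$ containing the surviving rich lines; a real partitioning polynomial has a real zero set, and passing from lines hugging $\BZ(P)\subset\RR^{2d}$ to a low--degree \emph{complex} hypersurface containing those complex lines is not automatic (the paper has to work through exactly this point via complexification, the Solymosi--Tao cover, and a generic projection, and still only reaches the $\epsilon$--loss bound). Your Step~2 then rests on ``the ruling/flecnodal geometry'' of components of $Z$ of degree $\geq 2$ to confine lines to boundedly many proper subvarieties. That machinery is essentially a $\CC^3$ (or $\RR^3$) phenomenon; in $\CC^d$ for $d\geq 4$ there is no off--the--shelf classification of line--rich hypersurfaces that makes this step go through, and ``generic slicing down to $\CC^{d-1}$'' of a high--degree hypersurface does not obviously preserve either the richness $r$ of the lines or the point count in a way compatible with the induction. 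These two points are where the argument would actually fail, and they are precisely the obstructions that led Dvir and Gopi to use a rank argument and led this paper to settle for the $n^{\epsilon}$--weakened Theorem~\ref{mainThm}.
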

The bounds in Theorem \ref{DGTheorem} are not believed to be tight. Dvir and Gopi conjectured the following bound, which (if correct) would be tight.
 
 \begin{conjecture}[Dvir, Gopi]\label{DGConj}
 For $r\geq 2$, suppose $\pts\subset \CC^d$ is a set of $n$ points with
 \begin{equation*}
 |\mathcal{L}_r(\pts)|>\!\!\!>_d \frac{n^2}{r^{d+1}}+\frac{n}{r}.
 \end{equation*}
 Then there exists an integer $1<t<d$ and a subset $\pts^\prime\subset \pts$ of size $\gtrsim_{d} n/r^{d-t}$ contained in a $t$--flat.
 \end{conjecture}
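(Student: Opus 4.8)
Since Conjecture~\ref{DGConj} is open, what follows is the route I would take and the point at which it stalls, rather than a proof. It is natural to split on the size of $|\mathcal L_r(\pts)|$. In the ``large'' regime $|\mathcal L_r(\pts)|\gtrsim_d n^2/r^{d}$ one is already within reach of the existing flat-finding results, and the plan is to induct on $d$: by the main result of this note (a Hablicsek--Scherr--type statement) one can pass to a $(d-1)$-flat $H$ that contains a positive proportion $\mathcal L'$ of $\mathcal L_r(\pts)$, and since every line lying in $H$ meets $\pts$ only inside $\pts\cap H$, the lines of $\mathcal L'$ are still $r$-rich for the point set $\pts\cap H\subset H\cong\CC^{d-1}$. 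One then feeds $(\pts\cap H,\mathcal L')$ into the conjecture in dimension $d-1$ and recurses, terminating at a $t$-flat with $1<t<d-1$; alternatively one applies Theorem~\ref{DGTheorem} directly inside $H$. Either way the extremal case $t=d-1$ is handled by taking $H$ itself as the flat, and one must check that the count $\gtrsim n/r^{d-t}$ survives the recursion.

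For this induction to close, the recurring technical fact one needs is a conversion of ``$H$ is rich in rich lines'' into ``$H$ is rich in points'': a lower bound of the shape $|\pts\cap H|\gtrsim n/r$ (with the appropriate analogue at each stage), because it is exactly this quantity that makes the dimension-$(d-1)$ hypothesis $|\mathcal L'|\gtrsim |\pts\cap H|^2/r^{d}+|\pts\cap H|/r$ fire and that makes the output $\gtrsim |\pts\cap H|/r^{(d-1)-t}$ match the target $\gtrsim n/r^{d-t}$. A crude double count, using $\sum_{\ell\in\mathcal L'}|\ell\cap\pts|\ge r|\mathcal L'|$ together with the fact that two lines meet in at most one point, gives such a bound once the maximum number of lines of $\mathcal L'$ through a common point is either very large (a pencil, handled trivially) or very small; the awkward regime is the one in between, and controlling it sharply for all $t$ simultaneously is where the bookkeeping is hardest.

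The main obstacle, however, is the ``small'' regime $n^2/r^{d+1}\lesssim |\mathcal L_r(\pts)|\lesssim n^2/r^{d}$: here none of Theorem~\ref{DGTheorem}, the result of this note, or the Szemer\'edi--Trotter--type planar bound triggers, yet the conjecture still demands a large low-dimensional flat, so closing this factor-of-$r$ gap appears to require a genuinely new input. The approach I would try is the polynomial method: when $r\gtrsim n^{1/d}$ one can find $P$ with $n^{1/d}\lesssim\deg P<r$ vanishing on $\pts$, and then every $r$-rich line lies in $Z(P)$, since it already contains $r>\deg P$ points of $Z(P)$. Factoring $P$ into irreducibles, one would hope that either some linear factor carries $\gtrsim|\mathcal L_r(\pts)|/r^{2}$ of the lines, giving a hyperplane to recurse into, or some higher-degree irreducible component contains so many lines that it must be ruled by a low-dimensional family of flats, from which one again extracts a flat with many points. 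The difficulty is precisely this second alternative: bounding the Fano variety of lines on an irreducible hypersurface of degree close to $n^{1/d}$, and then converting a ruling into an honest $t$-flat carrying $\gtrsim n/r^{d-t}$ of the points, is exactly the part of the conjecture that I do not see how to obtain with the tools of this note. I would therefore present the large-regime induction as a conditional reduction and flag the point-richness upgrade in the small regime as the crux.
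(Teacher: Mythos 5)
This statement is Conjecture~\ref{DGConj}, which the paper explicitly presents as open; there is no proof in the paper to compare against, and you were right to treat your writeup as a plan rather than a proof. One inaccuracy in the plan is worth flagging, though: in the ``large'' regime you say the flat-finding results deliver a $(d-1)$-flat $H$ containing a positive proportion $\mathcal{L}'$ of $\mathcal{L}_r(\pts)$, and you then propose to recurse on $(\pts\cap H,\mathcal{L}')$. But none of Theorem~\ref{DGTheorem}, Theorem~\ref{mainThm}, or Lemma~\ref{mainLem}(A) produce such an $H$: the first two output a flat that is rich in \emph{points}, and the lemma outputs a flat containing many lines \emph{through a single common point} $p$. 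A flat carrying a positive fraction of all of $\mathcal{L}_r(\pts)$ is a considerably stronger conclusion than any of these, so the ``pass to $H$ and induct on dimension'' step already requires a new ingredient, before one even reaches the conversion from line-richness of $H$ to point-richness of $H$ that you correctly flag as delicate.

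Your diagnosis of the regime $n^2/r^{d+1}\lesssim|\mathcal{L}_r(\pts)|\lesssim n^2/r^{d}$ as the genuine crux, and the sketch of a polynomial-method attack via a low-degree vanishing polynomial, factoring into irreducibles, and analyzing lines on the ruled components, is a sensible reflection of why the factor-of-$r$ gap between Theorem~\ref{DGTheorem} and Conjecture~\ref{DGConj} is regarded as hard. None of this is in the paper, which only cites the conjecture as motivation; so there is nothing further to compare.
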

This bound is a $d$--dimensional generalization of the (complex) Szemer\'edi-Trotter theorem \cite{Szemeredi, Toth,Zahl}. In \cite{HS}, Hablicsek and Scherr proved a stronger version of Theorem \ref{DGTheorem}, except Hablicsek and Scherr needed to replace complex lines in $\CC^d$ with real lines in $\RR^d$. This allowed them to use the discrete polynomial partitioning theorem \cite[Theorem 4.1]{Guth} and the joints theorem \cite{Guth2} of Guth and Katz. 
\begin{theorem}[Hablicsek, Scherr]\label{HSTHm}
For all $d \geq 1$, there exist constants $c_d,C_d$ such that the following holds. Let $\pts\subset\RR^d$ be a set of $n$ points, let $r\geq 2$ be an integer, and let $\mathcal{L}_r(\pts)$ be the set of lines that are incident to at least $r$ points from $\pts$. Suppose that
 \begin{equation*}
 |\mathcal{L}_r(\pts)|\geq C_d\cdot\frac{n^2}{r^{d+1}}.
 \end{equation*}
 Then there exists a subset $\pts^\prime\subset \pts$ of size at least $c_d \cdot \frac{n}{r^{d-1  }}$  contained in a $(d-1)$--flat.
 \end{theorem}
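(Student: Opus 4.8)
The plan is to argue by induction on the dimension $d$, with the discrete polynomial partitioning theorem \cite[Theorem 4.1]{Guth} and the joints theorem \cite{Guth2} of Guth and Katz as the main tools. The case $d=2$ is the real Szemer\'edi--Trotter theorem \cite{Szemeredi}: once $C_2$ exceeds the Szemer\'edi--Trotter constant, the hypothesis $|\lines_r(\pts)|\geq C_2 n^2/r^3$ forces the linear term to dominate, so $r\gtrsim\sqrt n$, and then any single $r$-rich line is a $1$-flat with $\geq r\gtrsim n/r$ points of $\pts$ (the case $d=1$ being vacuous once $C_1>1$). So fix $d\geq 3$ and assume Theorem~\ref{HSTHm} in dimension $d-1$. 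After a routine dyadic-pigeonholing pruning (which affects only the constants) we may assume every point of $\pts$ lies on $\gtrsim n/r^d$ of the rich lines; we may also assume $c_d n/r^{d-1}>1$, since otherwise any single point of $\pts$ lies in a $(d-1)$-flat containing at least $c_d n/r^{d-1}$ points of $\pts$. Here $C_d$ is taken large and $c_d$ small, both depending only on $d$.

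The first real step forces most of the rich lines onto a low-degree hypersurface. Apply polynomial partitioning at degree $D=\lceil cr\rceil$ for a small absolute constant $c$ (so $D<r$), obtaining a nonzero $P$ with $\deg P\leq D$ and $O_d(D^d)$ open cells, each containing $O_d(n/D^d)$ points of $\pts$. A rich line not contained in $Z(P)$ meets $Z(P)$ in at most $D$ points, hence has at least $r-D\gtrsim r$ of its points spread over the cells. Using the elementary incidence bound $I(m,s)\lesssim m\sqrt s+s$ (valid in every dimension, since two points lie on at most one line) inside each cell, summing over cells, and applying Cauchy--Schwarz with $\sum_\Omega|\pts\cap\Omega|^2\lesssim_d (n/D^d)\,n$, one finds that the number of rich lines \emph{not} lying on $Z(P)$ is $\lesssim_d n^2/(r^2 D^{d-1})\asymp_d n^2/r^{d+1}$; if $C_d$ is large enough this is less than half of $|\lines_r(\pts)|$, so at least half of the rich lines lie on $Z:=Z(P)$, a hypersurface of degree $D<r$.

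Now decompose $Z$ into its $O_d(r)$ irreducible components; each rich line, being irreducible, lies on a single component. If some component is a hyperplane carrying $\geq c_d n/r^{d-1}$ points of $\pts$, that hyperplane is the desired $(d-1)$-flat. Otherwise every hyperplane component carries $<c_d n/r^{d-1}$ points, and the bound $\binom m2/\binom r2$ on the number of $r$-rich lines among $m$ points, summed over the $O_d(r)$ hyperplane components, shows they support only $\lesssim_d c_d^2 n^2/r^{2d-1}\leq c_d^2 n^2/r^{d+1}$ rich lines, a small fraction for $c_d$ small. There remain the rich lines on the non-hyperplane components (including those of dimension $\leq d-2$). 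For a non-hyperplane irreducible component $Z_i$ one argues by downward induction on $\dim Z_i$: either every line contained in $Z_i$ lies on a ``flecnodal''-type subvariety of strictly smaller dimension and controlled degree, and one recurses there; or $Z_i$ is ruled, in which case one projects along the ruling onto a variety of dimension $\dim Z_i-1$ and applies the inductive hypothesis in dimension $d-1$ together with the joints theorem \cite{Guth2}, noting that joints of the lines lying on $Z_i$ are confined to $\sing(Z_i)$ and so feed the downward recursion, to bound the number of rich lines along each ruling before a $(d-1)$-flat with $\gtrsim n/r^{d-1}$ points is produced. At the bottom of the recursion a degree-$e$ curve contains at most $e$ lines, so each component contributes $O_d(r)$ rich lines and the $O_d(r)$ components contribute $O_d(r^2)$ in total, a small fraction of $|\lines_r(\pts)|\geq C_d n^2/r^{d+1}$. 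Adding the hyperplane and non-hyperplane contributions contradicts the hypothesis unless the required flat has already appeared, completing the induction.

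The main obstacle is this last part. Over $\RR$ a bounded-degree variety can contain an enormous family of lines — ruled surfaces are the model case — so one cannot simply say a non-planar component carries few lines; taming the ruled case is exactly what the joints theorem is for, since on a ruled component the lines fall into boundedly many rulings and incidences along a ruling are governed by a lower-dimensional bound coming from the inductive hypothesis, while joints, being confined to the singular locus, drop the dimension. Making this quantitative — carefully tracking the degrees through the downward recursion on dimension and balancing the $O_d(r)$ components so that their total stays below $C_d n^2/r^{d+1}$ — is the technical heart of Hablicsek and Scherr's argument, and it is precisely the part relying on real-algebraic input (the joints theorem, the structure theory of ruled hypersurfaces) that does not transfer verbatim to $\CC^d$, which is the difficulty the present note has to work around.
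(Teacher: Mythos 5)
This is a theorem the paper \emph{cites} from Hablicsek and Scherr \cite{HS}; the paper contains no proof of it, so there is nothing in the source to compare against line-by-line. Judging the proposal on its own terms and against the \cite{HS} strategy sketched in the introduction (``the discrete polynomial partitioning theorem and the joints theorem''), your setup is largely on track but the crucial step is wrong.

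What works: the base case $d=2$ via Szemer\'edi--Trotter, the pigeonholing so that every surviving point lies on $\gtrsim n/r^d$ rich lines, the choice of partitioning degree $D\asymp r$, the Cauchy--Schwarz/K\H{o}v\'ari--S\'os--Tur\'an computation showing at most half of $\lines_r(\pts)$ live off $\BZ(P)$, and the dismissal of hyperplane components with few points. All of this is standard and correct.

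What fails: the treatment of non-hyperplane components. The assertion that ``each component contributes $O_d(r)$ rich lines and the $O_d(r)$ components contribute $O_d(r^2)$ in total'' is simply false. A ruled (even doubly ruled) hypersurface of degree $O(r)$ can carry a one-parameter family of lines and hence arbitrarily many $r$-rich ones; nothing in the flecnodal/ruling sketch converts this into a bound of $O_d(r)$, and the ``project along the ruling and invoke the joints theorem'' step is too vague to check. The flecnodal-polynomial machinery is the Guth--Katz distinct-distances toolkit; it is not what is needed here and trying to make it quantitative in all dimensions $d$ is a substantial undertaking you have not carried out.

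You are also missing the elementary observation that makes this theorem tractable and that (I believe) drives the Hablicsek--Scherr argument: if $p$ is a \emph{regular} point of $\BZ(P)$ (a hypersurface), then every line through $p$ contained in $\BZ(P)$ lies in the tangent hyperplane $T_p\BZ(P)$, a genuine $(d-1)$-flat. Since each such line is $r$-rich and distinct rich lines through $p$ meet only at $p$, having $k\gtrsim n/r^d$ of them contained in $\BZ(P)$ immediately produces $\gtrsim k(r-1)\gtrsim n/r^{d-1}$ points of $\pts$ in $T_p\BZ(P)$, which is condition (A). One then only needs to handle points $p$ that are singular on $\BZ(P)$ --- this is where one passes to a lower-dimensional variety such as $\BZ(\partial_i P)$ and where the joints theorem is used to cap the recursion: a regular point of a hypersurface is never a joint, so points accumulating joints are forced into strata of decreasing dimension whose cardinality the joints theorem controls. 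Note also that the present paper's own proof of Lemma~\ref{mainLem} (the $\CC^d$ analogue) uses a \emph{bounded-degree} partitioning polynomial with induction on $n_1+\ell_1$, not $D\asymp r$; that route is another way to avoid the ruled-surface difficulty altogether. In short: keep your cellular argument, discard the flecnodal recursion, and replace the component-by-component counting with the tangent-hyperplane dichotomy at regular versus singular points.
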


In this paper we will prove a theorem similar to Theorem \ref{HSTHm} in the original setting of Dvir and Gopi (i.e.~for complex lines in $\CC^d$). As in the Hablicsek and Scherr proof, the discrete polynomial partitioning theorem will play a major role. The joints theorem will not be used directly, but similar types of arguments will be employed.

\subsection{Incidence theorems over the complex numbers}
Since its introduction in 2010, the discrete polynomial partitioning theorem has been used to prove many incidence bounds in $\RR^d$. The theorem makes crucial use of the fact that removing a point from $\RR$ disconnects the real line into two components. This property is not true for $\CC$, which means that the discrete polynomial partitioning theorem cannot be employed directly to prove incidence theorems in $\CC^d$. Of course, one can identify $\CC$ with $\RR^2$ and then the discrete polynomial partitioning theorem becomes available again. Unfortunately, moving from $\CC^d$ to $\RR^{2d}$ often makes the problem appear more complicated, since the dimension of the objects being studied has now doubled. However, since the problem originally arose from a configuration in $\CC^d$, the new configuration in $\RR^{2d}$ may have some special properties that can aid in the analysis of the problem. This strategy was employed by T\'oth in \cite{Toth} to prove the Szemer\'edi-Trotter theorem in $\CC^2$ and by Sheffer and the author in \cite{SZ} to obtain an incidence theorem for points and curves in $\CC^2$. In the present work we will note several elementary relationships between collections of lines in $\CC^d$ and the corresponding two-flats in $\RR^{2d},$ and these observations will allow us to transfer the  Hablicsek-Scherr argument from $\RR^d$ to $\CC^d$. 
 
\subsection{Statement of the theorem}
\begin{theorem}\label{mainThm}
For all $d \geq 1$ and $\epsilon>0$, there exist constants $c_{d,\epsilon},C_{d,\epsilon}$ such that the following holds. Let $\pts\subset\CC^d$ be a set of $n$ points and let $r\geq 2$ be an integer. Suppose that for some $\alpha\geq 1$,
\begin{equation}\label{manyRRichLines}
|\mathcal{L}_r(\pts)|> C_{d,\epsilon} \cdot\alpha\cdot \frac{n^{2+\epsilon}}{r^{d+1}}.
\end{equation}
Then there exists a subset $\pts^\prime\subset \pts$ of size at least $c_{d,\epsilon}\cdot\alpha\cdot \frac{n^{1+\epsilon}}{r^{d-1}}$ contained in a $(d-1)$--flat.
\end{theorem}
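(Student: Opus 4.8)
The plan is to realify and then transfer the Hablicsek--Scherr argument to $\RR^{2d}$. Identify $\CC^d$ with $\RR^{2d}$; a complex line becomes a real $2$-flat, a complex $(d-1)$-flat becomes a real $(2d-2)$-flat, the $2$-flats coming from complex lines are exactly those invariant under the ambient complex structure, and two distinct such $2$-flats still meet in at most a point. Before running the machine I would dispose of the degenerate parameter ranges: if $r\gtrsim(\alpha n^{1+\epsilon})^{1/(d-1)}$ the claimed lower bound on $|\pts^\prime|$ is below $1$ and the conclusion is trivial, and if $r\lesssim(\alpha n^{\epsilon})^{1/(d-1)}$ then \eqref{manyRRichLines} already contradicts the trivial estimate $|\mathcal L_r(\pts)|\le n^2/r^2$ and the hypothesis is vacuous; in particular one may assume $r^{d-1}\gtrsim\alpha n^\epsilon$ and that $n$ is large in terms of $d,\epsilon,\alpha$.

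Next, apply the discrete polynomial partitioning theorem to $\pts$ in $\RR^{2d}$ at a degree $D$ (a small power of $n$ over a power of $r$, pinned down at the end and kept $\ll r$), getting $P$ with $Z(P)$ cutting $\RR^{2d}\setminus Z(P)$ into $O(D^{2d})$ cells, each meeting $O(n/D^{2d})$ points of $\pts$. Restricting $P$ to a complex line gives a bivariate real polynomial of degree $\le D$, so the line either lies in $Z(P)$ or meets $O(D^2)$ cells. Partition $\mathcal L_r(\pts)$ into the lines with at least $r/2$ incident points in cell interiors and the rest. A line of the first kind is $\Omega(r/D^2)$-rich inside a single cell; assigning it there and invoking the theorem inductively on $n$ inside each cell (a $(d-1)$-flat found inside a cell is one in $\CC^d$), we either already win or learn that the first kind is a negligible fraction of $\mathcal L_r(\pts)$. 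Hence a constant fraction of the $r$-rich lines have $\ge r/2$ of their points on the degree-$D$ hypersurface $Z(P)$.

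The crux is the structural statement, Lemma~\ref{mainLem}, the complex analogue of the Hablicsek--Scherr lemma: if $\gtrsim M$ complex lines each meet $\pts$ in $\ge r/2 > D$ points lying on a complex hypersurface of degree $D$, then $\gtrsim M/\mathrm{poly}(d)$ of them lie in a common complex $(d-1)$-flat. I would prove this by induction on $d$. For the inductive step one sorts the lines by how many lines of the family pass through a typical incident point: lines through many concurrent lines force $Z(P)$ to be locally flat or singular there and so are confined to an auxiliary variety of degree $O(D)$ swept out by flats, which after a generic linear projection lowers the dimension, while the remaining lines are too few to matter. The base of the induction is handled by Lemma~\ref{curvesResult}, the point--curve incidence bound in $\CC^2$ of Sheffer and the author \cite{SZ}: intersecting a complex line with $Z(P)$ deposits its points on an algebraic curve of bounded degree rather than on an honest line, and it is this $\CC^2$ bound --- which carries an $n^\epsilon$ loss --- that is responsible for the $n^\epsilon$ in \eqref{manyRRichLines}. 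Finally, once a complex $(d-1)$-flat $H$ carries $m\gtrsim|\mathcal L_r(\pts)|/\mathrm{poly}(d)$ of the $r$-rich lines, the trivial incidence bound gives $|\pts\cap H|\gtrsim r\sqrt m$, and since $r^{d-1}\gtrsim\alpha n^\epsilon$ in the non-degenerate range a one-line computation yields $|\pts\cap H|\ge c_{d,\epsilon}\,\alpha\,n^{1+\epsilon}/r^{d-1}$.

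The main obstacle is Lemma~\ref{mainLem}. Over $\RR$ a line with more than $D$ points on $Z(P)$ lies in $Z(P)$ and the Guth--Katz ruled-hypersurface toolkit (flat and singular loci) applies verbatim; over $\CC$ two things break. A complex line with many points on the real hypersurface $Z(P)$ need not be contained in it --- only a real plane curve inside the line is forced --- so the whole analysis must run with ``many points on $Z(P)$'' rather than ``contained in $Z(P)$'', which is precisely where the curve incidence bound, and the $\epsilon$, enter. And the flats one manufactures must be promoted from generic real $(2d-2)$-flats to genuine complex $(d-1)$-flats, which means carrying the complex structure of the original $2$-flats through the flatness and projection arguments. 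Choosing $D$ to balance the cell count, the constraint $D\ll r$, and the $\epsilon$-losses accumulating through the induction is the remaining delicate bookkeeping.
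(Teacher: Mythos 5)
Your broad plan --- realify to $\RR^{2d}$, polynomial-partition, treat cell interiors by induction, and treat the boundary $Z(P)$ by a separate structural argument that respects the complex structure --- is the right shape, and you correctly flag the two real obstacles (a complex line with many points on a real hypersurface need not lie in it; real flats must be promoted to complex flats). But the core of your proposal misidentifies what Lemma~\ref{mainLem} says, and the lemma you put in its place is both not what the paper proves and not, I believe, true.

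You describe ``Lemma~\ref{mainLem}'' as a global ruled-surface statement: if $\gtrsim M$ complex lines each have $\ge r/2 > D$ of their points on a degree-$D$ hypersurface, then $\gtrsim M/\mathrm{poly}(d)$ of them lie in a \emph{single} common $(d-1)$-flat. The actual Lemma~\ref{mainLem} is an incidence dichotomy with a purely \emph{local} structural alternative: either (A) there exists one point $p$ such that a constant fraction of the lines of $\lines_1$ through $p$ lie in a common $(d-1)$-flat, or (B) $I(\pts_1,\lines_1)\lesssim n_1^{(2+\epsilon)/(d+1)}\ell_1^{d/(d+1)}+n_1+\ell_1$. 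The passage from this local conclusion to a large point set in a $(d-1)$-flat uses only that each line through $p$ is $\ge r/4$-rich and distinct lines through $p$ meet only at $p$; no global ruled-surface structure or ``trivial bound $|\pts\cap H|\gtrsim r\sqrt m$'' is invoked. Your global version is too strong: already for lines ruling a smooth quadric surface in $\CC^3$ no positive fraction of the ruling lies in a common plane, and for lines that merely deposit a curve on a \emph{real} hypersurface of $\RR^{2d}$ there is even less rigidity. The Guth--Katz flat/singular-locus toolkit you want to ``carry through the complex structure'' is exactly what the paper avoids having to rebuild by working with the local alternative (A).

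Your proposal also inducts directly on Theorem~\ref{mainThm} using one high-degree partition ($D$ a power of $n$ divided by a power of $r$); the paper explicitly remarks that Theorem~\ref{mainThm} does \emph{not} survive induction, which is precisely why it is proved via Lemma~\ref{mainLem} with a \emph{bounded}-degree $D=D(d,\epsilon)$ and induction on $n_1+\ell_1$. This also explains the role of $\epsilon$: it is not inherited from the Sheffer--Zahl $\CC^2$ curve bound (which appears only as background in the introduction and is not used), but is inserted into the exponent $\frac{2+\epsilon}{d+1}$ so that the cellular step in \eqref{inductionCalc} gains a factor $D^{-2d\epsilon/(d+1)}$ and the bounded-degree induction closes. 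The curve lemma, Lemma~\ref{curvesResult}, is likewise an incidence dichotomy for real loci of complex curves in $\RR^d$, proved by the same bounded-degree machinery, and it enters not as a base case of an induction on $d$ but to handle the points of $\pts_1$ sitting on $Z(P)$ after stratifying $Z(P)^*$ into regular pieces and applying a generic projection $\RR^{2d}\to\RR^d$. As written, your proof has a genuine gap: the structural claim you would need on $Z(P)$ is unproved and apparently false, and the bookkeeping you defer (choice of $D$, induction base, promotion of real flats to complex ones) is precisely where the paper's reformulation via Lemma~\ref{mainLem} does the work.
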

Theorem \ref{mainThm} is not strictly stronger than Theorem \ref{DGTheorem} because Theorem \ref{mainThm} contains the term $n^{2+\epsilon}$ rather than $n^2$. However, if $r$ is not too small compared to $n$, then one can ``trade'' the term $n^{\epsilon}$ for a term of the form $r^{\epsilon_1}$. More precisely, we have the following. 
\begin{corollary}[Cheap Dvir-Gopi]
For all $d \geq 1$ and $\epsilon_0>0$, there exist constants $c_{d,\epsilon_0},C_{d,\epsilon_0}$ such that the following holds. Let $\pts\subset\CC^d$ be a set of $n$ points and let $r\geq n^{\epsilon_0}$ be an integer. Suppose that for some $\alpha^\prime\geq 1$,
\begin{equation}\label{manyRRichLinesCheap}
|\mathcal{L}_r(\pts)|> C_{d,\epsilon_0} \cdot\alpha^\prime\cdot \frac{n^{2}}{r^{d}}.
\end{equation}
Then there exists a subset $\pts^\prime\subset \pts$ of size at least $c_{d,\epsilon_0}\cdot\alpha^\prime\cdot \frac{n}{r^{d-2}}$ contained in a $(d-1)$--flat.
\end{corollary}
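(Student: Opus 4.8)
The plan is to deduce the Corollary from Theorem \ref{mainThm} by a single change of parameters, using the hypothesis $r \geq n^{\epsilon_0}$ to absorb the factor $n^{\epsilon}$ produced by the theorem into a factor of $r$. Concretely, I would apply Theorem \ref{mainThm} with the choice $\epsilon := \epsilon_0$, and then \emph{define} the constants $C_{d,\epsilon_0}, c_{d,\epsilon_0}$ appearing in the Corollary to be exactly the constants $C_{d,\epsilon}, c_{d,\epsilon}$ furnished by Theorem \ref{mainThm} for this value of $\epsilon$. The point is that merely feeding the hypothesis \eqref{manyRRichLinesCheap} into Theorem \ref{mainThm} with the \emph{same} parameter $\alpha'$ would only yield a subset of size $\gtrsim \alpha' n^{1+\epsilon_0}/r^{d-1}$, which (since $r \geq n^{\epsilon_0}$) is \emph{weaker} than the claimed bound $\alpha' n/r^{d-2}$; the gain comes from instead applying the theorem with a larger value of $\alpha$.

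So, given $\pts \subset \CC^d$ and an integer $r \geq n^{\epsilon_0}$ satisfying \eqref{manyRRichLinesCheap} for some $\alpha' \geq 1$, I would set
\[
\alpha := \alpha' \cdot \frac{r}{n^{\epsilon_0}}.
\]
Because $r \geq n^{\epsilon_0}$, we have $\alpha \geq \alpha' \geq 1$, so $\alpha$ is an admissible parameter for Theorem \ref{mainThm} (this is the only place the hypothesis $r \geq n^{\epsilon_0}$ is used). Next I would verify that hypothesis \eqref{manyRRichLines} holds with these parameters by the computation
\[
C_{d,\epsilon_0}\cdot\alpha\cdot\frac{n^{2+\epsilon_0}}{r^{d+1}} = C_{d,\epsilon_0}\cdot\alpha'\cdot\frac{r}{n^{\epsilon_0}}\cdot\frac{n^{2+\epsilon_0}}{r^{d+1}} = C_{d,\epsilon_0}\cdot\alpha'\cdot\frac{n^2}{r^{d}},
\]
which by \eqref{manyRRichLinesCheap} is strictly less than $|\mathcal{L}_r(\pts)|$.

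Applying Theorem \ref{mainThm} with parameters $\epsilon_0$ and $\alpha$ then produces a subset $\pts' \subset \pts$ contained in a $(d-1)$--flat with
\[
|\pts'| \geq c_{d,\epsilon_0}\cdot\alpha\cdot\frac{n^{1+\epsilon_0}}{r^{d-1}} = c_{d,\epsilon_0}\cdot\alpha'\cdot\frac{r}{n^{\epsilon_0}}\cdot\frac{n^{1+\epsilon_0}}{r^{d-1}} = c_{d,\epsilon_0}\cdot\alpha'\cdot\frac{n}{r^{d-2}},
\]
which is exactly the asserted conclusion. I do not expect any genuine obstacle: all the substantive work is contained in Theorem \ref{mainThm}, and the only things requiring a moment's care are the bookkeeping of constants (they are inherited verbatim from Theorem \ref{mainThm} at $\epsilon = \epsilon_0$) and the elementary check that $\alpha \geq 1$. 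One could equally well run the argument with any $\epsilon \leq \epsilon_0$ in place of $\epsilon_0$, at the cost of a slightly weaker power of $r$ absorbed into $\alpha$, but taking $\epsilon = \epsilon_0$ gives the cleanest statement.
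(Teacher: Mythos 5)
Your proof is correct and proceeds by the same basic idea as the paper: feed the hypothesis into Theorem \ref{mainThm} with an inflated value of $\alpha$ so that the factor $n^{\epsilon}$ is absorbed. The paper instead sets $\rho = \log r/\log n$, takes $\epsilon = \rho/2$ (so $n^\epsilon = r^{1/2}$), and $\alpha = \alpha' r^{1/2}$; your version takes $\epsilon = \epsilon_0$ fixed and $\alpha = \alpha' r/n^{\epsilon_0}$. Both give $\alpha n^\epsilon = \alpha' r$ and hence the identical numerical cancellation, but your parametrization has the advantage that $\epsilon$ is a genuine constant depending only on $\epsilon_0$, so the resulting $c_{d,\epsilon_0}, C_{d,\epsilon_0}$ are manifestly independent of $n$ and $r$, whereas in the paper's version $\epsilon = \rho/2$ varies with $n$ and $r$ and one must (implicitly) argue that the constants $c_{d,\epsilon}, C_{d,\epsilon}$ from Theorem \ref{mainThm} behave well across the range $\epsilon \geq \epsilon_0/2$. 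Your choice sidesteps that bookkeeping entirely; otherwise the two proofs are essentially the same.
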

\begin{proof}
Let $\rho=\log r/\log n$; by assumption $\rho\geq\epsilon_0$, and for each $\epsilon>0,\ n^\epsilon = r^{\epsilon/\rho}.$ Select $\epsilon=\rho/2$, and let $\alpha = \alpha^\prime r^{1/2}$. Applying Theorem \ref{mainThm}, we conclude that either
\begin{equation}
\begin{split}
|\mathcal{L}_r(\pts)|&> C_{d,\epsilon} \cdot\alpha\cdot \frac{n^{2+\epsilon}}{r^{d+1}}\\
&> C_{d,\epsilon_0} \cdot\alpha^\prime \cdot\frac{n^{2}}{r^{d}},
\end{split}
\end{equation}
or there exists a subset $\pts^\prime\subset \pts$ of size at least $c_{d,\epsilon}\cdot\alpha\cdot \frac{n^{1+\epsilon}}{r^{d-1}}=c_{d,\epsilon}\cdot\alpha^\prime \cdot \frac{n}{r^{d-2}}$ contained in a $(d-1)$--flat.
\end{proof}

\subsection{Initial reductions}
In this section we will show that in order to prove Theorem \ref{mainThm} it suffices to prove the following lemma:
\begin{lemma}\label{mainLem}
For all $d \geq 1$ and $\epsilon>0$, there exist constants $c^\prime_{d,\epsilon},C^\prime_{d,\epsilon}$ such that the following holds. Let $\pts_1\subset\CC^d$ be a set of $n_1$ points and let $\mathcal{L}_1$ be a set of $\ell_1$ lines in $\CC^d$. Then at least one of the following statements must hold:
\begin{enumerate}

\item[(A)]  There is a point $p\in\pts_1$ and a complex $(d-1)$--flat $\Pi\subset\CC^d$ so that
\begin{equation}\label{onePopPt}
|\{L\in \lines_1\colon p\in L,\ L\subset\Pi\}|\geq c^\prime_{d,\epsilon}|\{L\in \lines_1\colon p\in L\}|.
\end{equation}
\item[(B)]\itemizeEqnVSpacing 
\begin{equation}\label{notTooManyIncidences}
I(\pts_1,\lines_1)\leq C^\prime_{d,\epsilon}\big(n_1^{\frac{2+\epsilon}{d+1}}\ell_1^{\frac{d}{d+1}}+n_1+\ell_1\big).
\end{equation}
\end{enumerate}
\end{lemma}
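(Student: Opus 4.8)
The plan is to run the Hablicsek--Scherr polynomial partitioning argument of \cite{HS} in $\RR^{2d}$, where a complex line through a point $p\in\CC^d$ becomes a real $2$--flat through $p$. The one new algebraic ingredient that makes the transfer possible is the following elementary observation: if $W\subset\RR^{2d}=\CC^d$ is a real linear hyperplane, then $W\cap iW$ is a complex subspace of complex dimension exactly $d-1$ (it is closed under multiplication by $i$, it has real codimension at most $2$, and it cannot be all of $\RR^{2d}$), and every complex line through $p$ that lies in the affine hyperplane $p+W$ in fact lies in the complex $(d-1)$--flat $p+(W\cap iW)$; more generally, all complex lines through $p$ inside a real flat through $p$ of dimension $\le 2d-2$ lie in one common complex $(d-1)$--flat. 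Thus ``many complex lines through $p$ trapped in a low-dimensional real variety'' will translate directly into alternative (A). Before starting I would dispose of degenerate parameter ranges (e.g.\ $\ell_1\lesssim n_1$, or $\ell_1$ super-polynomially large in $n_1$, where (B) is immediate) and discard from $\lines_1$ all lines meeting $\pts_1$ in fewer than two points, since they contribute at most $\ell_1$ to $I(\pts_1,\lines_1)$.

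Next I would apply the discrete polynomial partitioning theorem \cite[Theorem 4.1]{Guth} in $\RR^{2d}$ at a degree $D$ to be chosen, obtaining $P$ with $\RR^{2d}\setminus Z(P)$ decomposed into $O(D^{2d})$ cells $\Omega_i$, each containing $O(n_1/D^{2d})$ points of $\pts_1$, and writing
\begin{equation*}
I(\pts_1,\lines_1)\ =\ \sum_i I(\pts_1\cap\Omega_i,\ \lines_1)\ +\ I(\pts_1\cap Z(P),\ \lines_1).
\end{equation*}
For the cellular sum I would use two facts: a complex line $L$ meets at most $O(D^2)$ cells (the restriction $P|_L$ is a real polynomial of degree $\le D$ on the plane $L\cong\RR^2$, so $L\setminus Z(P)$ has $O(D^2)$ connected components, each lying in a single cell), and inside a fixed cell the trivial Cauchy--Schwarz bound $I(\pts_1\cap\Omega_i,\lines_1)\lesssim \ell_i+|\pts_1\cap\Omega_i|\,\ell_i^{1/2}$ holds, where $\ell_i$ is the number of lines meeting $\Omega_i$. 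Summing, using $\sum_i\ell_i\lesssim D^2\ell_1$ and $\sum_i|\pts_1\cap\Omega_i|\le n_1$, yields a cellular contribution of the shape $D^2\ell_1+n_1D^{1-d}\ell_1^{1/2}$.

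The heart of the matter is the term $I(\pts_1\cap Z(P),\lines_1)$, which I would split according to whether a line lies in $Z(P)$. For lines contained in $Z(P)$: at a smooth point $p$ of $Z(P)$, every such line through $p$ lies in the tangent hyperplane $T_pZ(P)$, hence by the observation above all of them lie in a single complex $(d-1)$--flat, so if a fixed small fraction $c'_{d,\epsilon}$ of the lines of $\lines_1$ through $p$ lie in $Z(P)$ then alternative (A) holds and we are done. Otherwise this sub-sum is at most $c'_{d,\epsilon}\,I(\pts_1,\lines_1)$ plus the contribution of the points of $\pts_1$ lying in $\mathrm{Sing}(Z(P))$; the latter set lies in a variety of dimension $\le 2d-2$ and of degree polynomial in $D$, on which the same dichotomy can be re-run (the dimension dropping each round, so the recursion terminates after $O_d(1)$ steps and the flat-extraction observation still applies since tangent spaces of lower-dimensional real varieties still force complex lines into complex $(d-1)$--flats). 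For the lines $\lines''$ \emph{not} contained in $Z(P)$: the incident points of $\pts_1\cap Z(P)$ on such a line $L$ all lie on the real algebraic curve $Z(P)\cap L$ of degree $\le D$ inside the plane $L\cong\RR^2$, so this contribution is governed by a point--curve incidence estimate, which is exactly the content of Lemma \ref{curvesResult}.

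Finally I would choose $D$ to balance the cellular bound against the bound coming from Lemma \ref{curvesResult}; the optimization of the ``clean'' version produces the exponents $\tfrac{2}{d+1}$ and $\tfrac{d}{d+1}$, and the polynomial-in-$D$ losses from the point--curve estimate (together with the need for strict inequality in order to absorb constants and close the recursion) reappear as a factor $n_1^{\epsilon/(d+1)}$ — equivalently the claimed $n_1^{(2+\epsilon)/(d+1)}$ — and as the $\epsilon$-dependence of $c'_{d,\epsilon}$ and $C'_{d,\epsilon}$. I expect the main obstacle to be precisely the transverse-lines term $I(\pts_1\cap Z(P),\lines'')$: in the real setting of \cite{HS} a line not contained in $Z(P)$ meets it in at most $D$ points, but a complex line transverse to $Z(P)$ can carry arbitrarily many points of $\pts_1$ strung along the curve $Z(P)\cap L$, so a genuine curve-incidence input is unavoidable here; the secondary difficulty — upgrading ``trapped in the real tangent hyperplane'' to ``trapped in a complex $(d-1)$--flat'' — is handled by the linear-algebra observation of the first paragraph.
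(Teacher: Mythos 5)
Your high-level architecture matches the paper's: partition $\iota(\pts_1)\subset\RR^{2d}$ with a polynomial of degree $D$, handle the cellular incidences, then split the boundary contribution according to whether $\iota(L)$ is contained in $\BZ(P)$ (tangent hyperplane at smooth boundary points $\Rightarrow$ complex $(d-1)$--flat, which is your $W\cap iW$ observation and coincides with the paper's Lemma~\ref{vectorsTrappedInRealSpaceImpliesTrappedInComplex}) or transverse (curve incidences, i.e.\ Lemma~\ref{curvesResult}). That much is right. However, there is a genuine gap in the cellular step that undermines the whole argument, and it is precisely the point the paper goes out of its way to handle.

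You bound the cellular term by applying Cauchy--Schwarz in each cell, obtaining $O(D^2\ell_1 + n_1 D^{1-d}\ell_1^{1/2})$, and then ``choose $D$ to balance'' this against the boundary term. But making the second term $\lesssim n_1^{2/(d+1)}\ell_1^{d/(d+1)}$ forces $D\approx n_1^{1/(d+1)}\ell_1^{-1/(2(d+1))}$, which is a positive power of $n_1$ in the relevant range. With $D$ this large, the boundary side of your argument collapses: the curves $\BZ(P)\cap\iota(L)$ have degree $\le D$, and the constant $C''_\epsilon$ in Lemma~\ref{curvesResult} depends on that degree (and on the multiplicity $s$, which also grows with $D$). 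So the transverse-lines term carries a $\operatorname{poly}(D)=\operatorname{poly}(n_1)$ loss, which cannot be absorbed by the $n_1^{\epsilon/(d+1)}$ slack. Conversely, if you keep $D=O_{d,\epsilon}(1)$ so that the boundary estimate is usable, the Cauchy--Schwarz cellular bound is only $O_D(n_1\ell_1^{1/2}+\ell_1)$, which is far above the target. There is no value of $D$ that works for both halves. This is exactly why the paper proves Lemma~\ref{mainLem} (rather than Theorem~\ref{mainThm}) by \emph{induction on $n_1+\ell_1$} with a \emph{bounded} $D$: inside each cell it invokes the lemma itself with $\approx n_1/D^{2d}$ points, and the exponent $\frac{2+\epsilon}{d+1}$ produces a factor $D^{-2d\epsilon/(d+1)}$ after summing over cells, which is what absorbs the $O_d(D^2)$ from Barone--Basu and closes the induction. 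Your proposal has no mechanism of this kind; the stray phrase ``close the recursion'' is not backed by an actual inductive argument, and the only recursion you describe (on the dimension of the singular stratum) does not replace it.

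Two smaller remarks. First, you never project the transverse curves down to $\RR^d$. The paper applies a generic linear map $\pi:\RR^{2d}\to\RR^d$ before invoking Lemma~\ref{curvesResult}; without it you are left with curves in $\RR^{2d}$ and a curve-incidence exponent $\tfrac{2}{2d+1}$, $\tfrac{2d}{2d+1}$ rather than $\tfrac{2}{d+1}$, $\tfrac{d}{d+1}$. One can check, using the preliminary reduction $\ell_1\lesssim n_1^2$, that this still lands under the target, so this is repairable, but it should be said; and the projection step is also where the paper converts the ``contained in the tangent space of $V$'' condition into a tangency condition in $\RR^d$ that Lemma~\ref{curvesResult}'s alternative (A.2) can detect, which you would need as well. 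Second, your treatment of $\operatorname{Sing}(\BZ(P))$ by repeated stratification is essentially what the paper packages via the Solymosi--Tao covering lemma (Lemma~\ref{CoverVarietySmoothPts}) applied to $\BZ(P)^*$; that part of your plan is fine, just less tidy.
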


To obtain Theorem  \ref{mainThm} from Lemma \ref{mainLem}, fix $\epsilon>0$. Let $\pts\subset\CC^d$ be a set of $n$ points and suppose \eqref{manyRRichLines} holds. If $r\geq \alpha^{1/d}n^{\frac{1+\epsilon}{d}}$, then 
\begin{equation*}
\alpha\frac{n^{1+\epsilon}}{r^{d-1}}\leq r,
\end{equation*}
so Theorem  \ref{mainThm} holds with $c_{d,\epsilon}=1$; just select any line from $\mathcal{L}_r(\pts)$, let $\pts^\prime$ be the set of points incident to the line, and select any $(d-1)$--flat containing the line.

Henceforth we will assume that $r< \alpha^{1/d}n^{\frac{1+\epsilon}{d}}$. Repeating an argument of Hablicsek and Scherr \cite{HS}, we can find a set $\lines_1\subset\mathcal{L}_{r}(\pts)$ and a set  $\pts_1\subset\pts$ so that each line in $\lines_1$ is incident to at least $r/4$ points from $\pts_1$, each point in $\pts_1$ is incident to at least $\frac{1}{4}C^\prime_{d,\epsilon} \cdot\alpha\cdot \frac{n^{1+\epsilon}}{r^{d}}$ lines from $\lines_1$, and $I(\pts_1,\lines_1)\geq\frac{1}{2}I(\pts,\mathcal{L}_r(\pts))$. In brief, the argument is as follows. If $G=(A\sqcup B,E)$ is a bipartite graph, then we can find subsets $A^\prime\subset A,\ B^\prime\subset B$ so that in the induced subgraph, each vertex from $A$ has degree at least $\frac{|E|}{4|A|}$, each vertex from $B$ has degree at least  $\frac{|E|}{4|B|}$, and $|E^\prime|\geq|E|/2$. We apply this lemma to the bipartite graph with edge set $A=\pts,\ B=\mathcal{L}_{r}(\pts),$ and where $a\sim b$ if the point corresponding to $a$ lies on the line corresponding to $b$. See \cite{HS} for details.

Let $p\in\pts_1$ and let $\rho>0$. Observe that if a $\rho$--fraction of the lines from $\lines_1$ passing through $p$ lie in a common $(d-1)$--flat, then at least $\frac{\rho}{16}C_{d,\epsilon} \cdot\alpha\cdot \frac{n^{1+\epsilon}}{r^{d-1}}$ points from $\pts_1$ lie in a common $(d-1)$--flat. 

Apply Lemma \ref{mainLem} to the arrangement $(\pts_1,\lines_1)$ (with the same value of $\epsilon$). If condition (A) from Lemma \ref{mainLem} holds, then there exists a subset of $\pts^\prime\subset\pts$ of cardinality at least $\frac{1}{16}c^\prime_{d,\epsilon}\cdot \alpha\cdot \frac{n^{1+\epsilon}}{r^{d-1}}$ contained in a $(d-1)$--flat.

If condition (A) does not hold, then condition (B) must hold, and this implies that 
$$I(\pts,\mathcal{L}_{r}(\pts))\leq C^\prime_{d,\epsilon}\big(n_1^{\frac{2+\epsilon}{d+1}}\ell_1^{\frac{d}{d+1}}+n_1+\ell_1\big).$$ 

But since each line from $\mathcal{L}_{r}(\pts)$ is $r$--rich, we conclude that 
\begin{equation*}
\begin{split}
r|\mathcal{L}_{r}(\pts)|&\leq 2C^\prime_{d,\epsilon}\big(n_1^{\frac{2+\epsilon}{d+1}}\ell_1^{\frac{d}{d+1}}+n_1+\ell_1\big)\\
&\leq 2C^\prime_{d,\epsilon}\big(n^{\frac{2+\epsilon}{d+1}}|\mathcal{L}_{r}|^{\frac{d}{d+1}}+n+|\mathcal{L}_{r}|\big),
\end{split}
\end{equation*}
and thus
\begin{equation}
\begin{split}
|\mathcal{L}_{r}|&\leq 2^{d+1}C^\prime_{d,\epsilon}\big(\frac{n^{2+\epsilon}}{r^{d+1}}+\frac{n}{r}\big)\\
&\leq 2^{d+1}C^\prime_{d,\epsilon}\alpha \frac{n^{2+\epsilon}}{r^{d+1}},
\end{split}
\end{equation}
where on the second line we used the fact that $\alpha\geq 1$ and $r< \alpha^{1/d}n^{\frac{1+\epsilon}{d}}$. This contradicts the assumption that \eqref{manyRRichLines} holds. Thus we obtain Theorem \ref{mainThm} with $c_{d,\epsilon}=\frac{1}{16}c^\prime_{d,\epsilon}$ and $C_{d,\epsilon}=2^{d+1}C^\prime_{d,\epsilon}$. 

\subsection{Main proof ideas}
The rest of this paper will be devoted to proving Lemma \ref{mainLem}. The basic idea is to use a bounded-degree partitioning polynomial and prove the lemma by induction on $n_1+\ell_1$. Theorem \ref{mainThm} does not survive the process of induction, but Lemma \ref{mainLem} does---this is why we prove Lemma \ref{mainLem} first rather than proving Theorem \ref{mainThm} directly.

Here are the main steps. We regard complex lines in $\CC^d$ as two-flats in $\RR^{2d}$. We will then prove Lemma \ref{mainLem} by induction on the number of points and flats. We partition $\RR^{2d}$ into cells using a bounded-degree partitioning polynomial; inside each cell we can apply the induction hypothesis. Either there is a point inside a cell satisfying condition (A) from the lemma, or the total number of incidences inside the cells is controlled by \eqref{notTooManyIncidences}. 

We must now deal with incidences occurring on the boundary of the partition. If $p$ is a point lying on the boundary of the partition and $X$ is a two-flat (arising from a complex line) that is incident to $p$, then $X$ is either contained in the boundary of the partition or intersects the boundary in a bounded-degree algebraic set of dimension at most one. 

If the former option occurs most of the time (for a given point $p$), then there must exist a $(2d-1)$--flat in $\RR^{2d}$ containing many two-flats, each of which contains $p$, and these two-flats in turn arise from complex lines. This implies that many complex lines are contained in a complex $(d-1)$--flat in $\CC^d$. 

If the latter option occurs most of the time, then after applying a generic linear transformation we are reduced to a problem that is very similar to our original one, except now we are dealing with points and bounded-degree curves in $\RR^d$, rather than points and two-flats in $\RR^{2d}$. A similar argument shows that either the number of incidences is controlled by \eqref{notTooManyIncidences}, or there is a point $p$ and a $(d-1)$--flat in $\RR^{2d}$ containing $p$ so that many curves passing through $p$ are tangent to this $(d-1)$--flat at the point $p$. This implies that in the original configuration of complex lines there are many lines passing through a common point that are contained in a complex $(d-1)$--flat. 
\subsection{Thanks}

The author would like to thank M\'arton Hablicsek for pointing out errors in an earlier version of this manuscript and to the two anonymous referees for their helpful suggestions. The author was supported by a NSF Postdoctoral Fellowship.

\section{Preliminaries}
\subsection{Real and complex vectors}
Let $\iota\colon\CC^d\to\RR^{2d}$ be the map $\iota(x_1+iy_1,\ldots,x_d+iy_d)\mapsto(x_1,y_1,\ldots,x_d,y_d).$ If $v=(a_1,b_1,\ldots,a_d,b_d)\in\RR^{2d}$, then $\iota^{-1}(v) = (a_1+ib_1,\ldots,a_d+ib_d)\in\CC^d$.

If $X_1,\ldots,X_k$ are vector-spaces in $\RR^d$ (resp. $\CC^d$), let $\operatorname{span}_{\RR}\{X_1,\ldots,X_k\}$ (resp. $\operatorname{span}_{\CC}\{X_1,\ldots,X_k\}$) be the linear span of these vector spaces, regarded as subspaces of $\RR^d$ or $\CC^d$. Abusing notation, we will identify the non-zero vector $v\in\RR^d$ with the one-dimensional vector space $\RR v$, and similarly for vectors $v\in\CC^d$.

If $v=(a_1,b_1,\ldots,a_d,b_d)\in\RR^{2d},$ define $v^{\dagger}=\operatorname{span}_{\RR}\{(a_1,b_1,\ldots,a_d,b_d),(-b_1,a_1,\ldots,-b_d,a_d)\}$. By definition, if $v\in\RR^{2d}$ then
$$
v^{\dagger}=\iota(\operatorname{span}_{\CC}(\iota^{-1}(v))).
$$
I.e.,~if $v\in\RR^{2d}$ is a vector, let $\iota^{-1}(v)\in\CC^d$ be its pre-image under $\iota$. Then the span of $\iota^{-1}(v)$ is a one-dimensional complex subspace of $\CC^d$. The image of this subspace under $\iota$ is a two dimensional real subspace of $\RR^{2d}$. This is precisely $v^{\dagger}$.

Finally, if $\Pi\subset\RR^{2d}$ is a vector space, define $\Pi^\dagger=\operatorname{span}_{\RR}\{v^{\dagger}\colon v\in\Pi\}$. Note that $\dim(\Pi^{\dagger})\leq 2\dim(\Pi)$.

\subsection{Linear Algebra}

\begin{lemma}\label{vectorsTrappedInRealSpaceImpliesTrappedInComplex}
Let $L_1,\ldots,L_k$ be lines in $\CC^d$ passing through the origin. If there is a $(2k-1)$--flat $\Pi_0\subset\RR^{2d}$ containing $\iota(L_1),\ldots,\iota(L_k)$, then there is a $(d-1)$--flat $\Pi\subset\CC^d$ containing $L_1,\ldots,L_d$. 
\end{lemma}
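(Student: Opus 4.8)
The plan is to show that the real span $V:=\operatorname{span}_\RR\{\iota(L_1),\ldots,\iota(L_k)\}$ is itself the $\iota$-image of a complex linear subspace of $\CC^d$, and then to exploit a parity mismatch: a complex subspace has even real dimension, while $V$ is trapped inside the odd-dimensional flat $\Pi_0$.

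First I would record the cheap containments. Each $L_i$ passes through the origin, hence so does each $\iota(L_i)$, so $\Pi_0$ is in fact a linear subspace of $\RR^{2d}$ and $V\subseteq\Pi_0$; therefore $\dim_\RR V\le 2k-1$. (In particular $2k-1\le\dim_\RR\RR^{2d}=2d$, so $k\le d$; this is the only place the ambient dimension enters, and it is why the conclusion is stated with $d-1$ rather than $k-1$.)

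The key step is that $V=\iota(W)$, where $W:=\operatorname{span}_\CC\{L_1,\ldots,L_k\}$. Since each $L_i$ is a complex line, it is closed under multiplication by $i$, so the real linear span $\operatorname{span}_\RR\{L_1,\ldots,L_k\}$ already equals the complex linear span $W$; applying $\iota$ gives $\iota(W)=\operatorname{span}_\RR\{\iota(L_1),\ldots,\iota(L_k)\}=V$. (Equivalently, in the notation of the preliminaries, $V$ is $\dagger$-closed and $V^\dagger=\iota(W)$.) Consequently $\dim_\RR V=\dim_\RR\iota(W)=2\dim_\CC W$ is even.

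Finally I would combine the two halves: $2\dim_\CC W=\dim_\RR V\le 2k-1$ forces $\dim_\CC W\le k-1\le d-1$, so $W$ is contained in some $(d-1)$-dimensional complex linear subspace $\Pi\subset\CC^d$, which is a $(d-1)$-flat containing $L_1,\ldots,L_k$. I do not expect any serious obstacle; the entire content sits in the key step, namely the observation that the real span of the two-flats $\iota(L_i)$ is genuinely a complex subspace and therefore has even real dimension, so it cannot occupy an odd-dimensional real flat.
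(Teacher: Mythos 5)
Your argument is correct, but it takes a genuinely different route than the paper. The paper's proof is constructive: it picks a real vector $v_0$ orthogonal to $\Pi_0$, observes that $v_0$ is then orthogonal to each $\iota(L_j)$, and upgrades this real orthogonality to complex Hermitian orthogonality of $\iota^{-1}(v_0)$ to each $L_j$ (the upgrade uses that each $L_j$ is closed under multiplication by $i$, so $\mathrm{Re}\,\langle \iota^{-1}(v_0),w\rangle=0$ and $\mathrm{Re}\,\langle \iota^{-1}(v_0),iw\rangle=0$ together kill both real and imaginary parts of the Hermitian product); then $\Pi$ is taken to be the Hermitian orthogonal complement of $\iota^{-1}(v_0)$, a $(d-1)$--dimensional complex subspace. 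Your proof instead is a parity/dimension-counting argument: the real span of the $\iota(L_j)$ is the $\iota$--image of a complex subspace (again because each $L_j$ is closed under $i$-multiplication), so it has even real dimension, which cannot fill a $(2k-1)$--dimensional flat; hence the complex span has complex dimension at most $k-1\le d-1$. Both hinge on the same underlying fact — closure of complex lines under $i$-multiplication — but yours isolates the parity obstruction cleanly and avoids any appeal to inner products, while the paper's produces an explicit $\Pi$ as an orthogonal complement. Either is adequate for the application; your version is arguably the more transparent of the two.
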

\begin{proof}
Since $L_1,\ldots,L_k$ pass through the origin, $\Pi_0$ must also contain the origin. Let $v_0\in\RR^{2d}$ be a vector orthogonal to $\Pi_0$. Then $v_0$ is orthogonal to $\iota(L_j)$ for each $j=1,\ldots,k$. This implies that $\iota^{-1}(v_0)$ is orthogonal to $L_j$ for each index $j=1,\ldots,k$. Let $\Pi$ be the orthogonal compliment of $\iota^{-1}(v_0)$.
\end{proof}

\begin{lemma}\label{dependentRealVectorsImpliesDependentComplexVectors}
Let $L_1,\ldots,L_k$ be lines in $\CC^d$ passing through the origin. For each index $i$, let $v_i\in\iota(L_i)$ be a non-zero vector. Let $\Pi_0\subset\RR^{2d}$ be a $(d-1)$--dimensional vector space. If $v_1,\ldots,v_k\in\Pi_0$, then there is a $(d-1)$--dimensional subspace $\Pi\subset\CC^d$ containing $L_1,\ldots,L_k$.
\end{lemma}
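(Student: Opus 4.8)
The plan is to build the complex $(d-1)$--flat directly as the complex span of the chosen vectors, and to control its dimension using the real dimension of $\Pi_0$. Write $w_i=\iota^{-1}(v_i)\in\CC^d$. Since $v_i\neq 0$ lies in $\iota(L_i)$, the vector $w_i$ is a nonzero element of the complex line $L_i$, so $L_i=\operatorname{span}_{\CC}\{w_i\}$. Set $\Pi=\operatorname{span}_{\CC}\{w_1,\ldots,w_k\}\subset\CC^d$; by construction $\Pi$ is a complex subspace containing every $L_i$, so it suffices to show that $\dim_{\CC}\Pi\leq d-1$, after which we may enlarge $\Pi$ to an exactly $(d-1)$--dimensional complex subspace if necessary. (Unlike Lemma \ref{vectorsTrappedInRealSpaceImpliesTrappedInComplex}, where the hypothesis is phrased in terms of a flat containing whole two-flats and it is natural to pass to orthogonal complements, here the hypothesis singles out one real vector per line, and the span construction is the natural route.)

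The one ingredient needed is the following elementary transfer of linear independence: if $u_1,\ldots,u_m\in\CC^d$ are linearly independent over $\CC$, then $\iota(u_1),\ldots,\iota(u_m)\in\RR^{2d}$ are linearly independent over $\RR$. Indeed, $\CC$--linear independence of $u_1,\ldots,u_m$ implies their $\RR$--linear independence inside $\CC^d$ viewed as a real vector space, and $\iota$ is an $\RR$--linear isomorphism onto $\RR^{2d}$, so it preserves real linear independence.

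Now suppose, for contradiction, that $\dim_{\CC}\Pi=d$, i.e.\ that $w_1,\ldots,w_k$ span $\CC^d$ over $\CC$. Then some $d$ of these vectors, say $w_{i_1},\ldots,w_{i_d}$ after relabeling, form a $\CC$--basis of $\CC^d$, hence are $\CC$--linearly independent. By the transfer statement above, $v_{i_1}=\iota(w_{i_1}),\ldots,v_{i_d}=\iota(w_{i_d})$ are linearly independent over $\RR$ in $\RR^{2d}$. But all of $v_{i_1},\ldots,v_{i_d}$ lie in $\Pi_0$, which forces $\dim_{\RR}\Pi_0\geq d$, contradicting the hypothesis $\dim_{\RR}\Pi_0=d-1$. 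Therefore $\dim_{\CC}\Pi\leq d-1$, and the lemma follows. I expect no real obstacle here; the only points requiring care are that the linear-independence transfer goes only in the direction ``complex independence $\Rightarrow$ real independence of the $\iota$--images'' (the converse fails, e.g.\ for $\{1,i\}\subset\CC$), and that one should bound $\dim_{\CC}\Pi$ against $\dim_{\RR}\Pi_0$ rather than attempting to compare it with $\dim_{\RR}\Pi_0^{\dagger}$ or $2\dim_{\RR}\Pi_0$.
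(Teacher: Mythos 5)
Your proof is correct. The route is slightly different from the paper's, though both rest on the same underlying fact that $\iota^{-1}\colon\RR^{2d}\to\CC^d$ is $\RR$--linear, so $\RR$-linear relations pass to $\RR$-linear (hence $\CC$-linear) relations. The paper argues directly: it takes $w_1,\ldots,w_{d-1}$ spanning $\Pi_0$ over $\RR$, sets $\Pi=\operatorname{span}_{\CC}\{\iota^{-1}(w_1),\ldots,\iota^{-1}(w_{d-1})\}$, and observes that since each $v_j$ is an $\RR$-combination of the $w_i$, the vector $\iota^{-1}(v_j)$ lies in $\Pi$; the bound $\dim_{\CC}\Pi\le d-1$ is then immediate because $\Pi$ has $d-1$ generators. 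You instead take $\Pi$ to be the complex span of the $\iota^{-1}(v_i)$ themselves (so containment of the $L_i$ is immediate) and bound $\dim_{\CC}\Pi$ by contradiction via the transfer ``$\CC$-independence implies $\RR$-independence of the $\iota$-images.'' These are essentially contrapositive formulations of one another: the paper makes the inclusion the thing to check, you make the dimension bound the thing to check. Your remark that the transfer goes only one way, and that one should compare $\dim_{\CC}\Pi$ with $\dim_{\RR}\Pi_0$ rather than with $\dim_{\RR}\Pi_0^{\dagger}$, is a good sanity check. One minor stylistic point: your contradiction could be rephrased directly, without contradiction, as ``$\dim_{\RR}\operatorname{span}_{\RR}\{v_1,\ldots,v_k\}\ge\dim_{\CC}\operatorname{span}_{\CC}\{\iota^{-1}(v_1),\ldots,\iota^{-1}(v_k)\}$, and the left side is at most $d-1$,'' which brings it even closer to the paper's one-line argument.
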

\begin{proof}
Let $w_1,\ldots,w_{d-1}\in\RR^{2d}$ be vectors that span $\Pi_0$. For each index $j$ we have $v_j = \sum_i a_{ij}w_i$, and thus $\iota^{-1}(v_j) = \sum_i \iota^{-1}(a_{ij}w_i)$ But this implies that $L=\CC \iota^{-1}(v_j) \subset  \operatorname{span}_{\CC}\{\iota^{-1}(w_1),\ldots,\iota^{-1}(w_d)\}$. Define $\Pi =\operatorname{span}_{\CC}\{\iota^{-1}(w_1),\ldots,\iota^{-1}(w_d)\}.$
\end{proof}

\begin{lemma}\label{dMOneRealDimInComplex}
Let $v_1,\ldots,v_k\in\RR^{2d}$. Let $\Pi\subset\RR^{2d}$ be a $(d-1)$--dimensional vector space, and suppose $v_1,\ldots,v_k\in \Pi$. Then $v_1^\dagger,\ldots,v_k^\dagger \subset \Pi^\dagger$.
\end{lemma}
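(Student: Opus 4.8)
The statement is an immediate consequence of the definition of $\Pi^\dagger$, so the "proof" is really just a matter of unwinding notation. Recall that in the Preliminaries $\Pi^\dagger$ was defined to be $\operatorname{span}_{\RR}\{v^\dagger\colon v\in\Pi\}$, the real linear span of the two-dimensional subspaces $v^\dagger$ as $v$ ranges over \emph{all} of $\Pi$ (not over some chosen basis). The plan is therefore simply to observe that each $v_i$, being an element of $\Pi$ by hypothesis, contributes its subspace $v_i^\dagger$ to this spanning family; hence $v_i^\dagger\subseteq\operatorname{span}_{\RR}\{v^\dagger\colon v\in\Pi\}=\Pi^\dagger$ for every $i$, which is exactly the claim.

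The only thing worth flagging is that the hypothesis $\dim\Pi=d-1$ plays no role in establishing the containment itself. It is recorded in the statement because, combined with the earlier bound $\dim(\Pi^\dagger)\le 2\dim(\Pi)=2d-2$, it guarantees that $\Pi^\dagger$ is a proper subspace of $\RR^{2d}$ — indeed one of dimension at most $2d-2$. This is the form in which the lemma is used later: when many two-flats arising from complex lines through a common point are all forced into a common $(d-1)$-flat of $\RR^{2d}$ (on the boundary of the partition), one wants to conclude that $\Pi^\dagger$, which now contains all of those two-flats, still sits inside a proper real subspace, so that Lemma \ref{vectorsTrappedInRealSpaceImpliesTrappedInComplex} or Lemma \ref{dependentRealVectorsImpliesDependentComplexVectors} can be applied to push the complex lines into a complex $(d-1)$-flat.

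There is no genuine obstacle here — no genericity argument and no linear-algebra identity beyond reading off the definition. An alternative, slightly longer route would be to fix a basis $w_1,\dots,w_{d-1}$ of $\Pi$, write each $v_i=\sum_j a_{ij}w_j$, and use that the pair of spanning vectors of $v^\dagger$ (namely $v$ and its image under the complex-structure matrix) depends $\RR$-linearly on $v$, so that $v_i^\dagger\subseteq\operatorname{span}_{\RR}\{w_1^\dagger,\dots,w_{d-1}^\dagger\}\subseteq\Pi^\dagger$; but this is strictly more work than the definition itself demands, so I would present the one-line argument above.
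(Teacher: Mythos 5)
Your proof is correct and matches the paper's treatment: the paper states Lemma \ref{dMOneRealDimInComplex} without any proof at all, treating it as an immediate consequence of the definition $\Pi^\dagger=\operatorname{span}_{\RR}\{v^{\dagger}\colon v\in\Pi\}$, which is precisely the one-line observation you make. Your remark that the dimension hypothesis is present only to feed the later bound $\dim(\Pi^\dagger)\le 2d-2$ (so that Lemma \ref{vectorsTrappedInRealSpaceImpliesTrappedInComplex} can be invoked) is also an accurate reading of how the lemma is used.
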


\subsection{Real and complex varieties}\label{realCplxVarSec}
We will need several properties of real and complex affine varieties. For complex varieties, a good introduction can be found in  \cite{Harris}, while for real varieties \cite{BCR} is a good source. 

Let $K=\RR$ or $\CC$. Let $V\subset K^d$ be an irreducible variety, and let $I(K)$ be the ideal of polynomials in $K[x_1,\ldots,x_d]$ that vanish on $V$. For $x\in K$, we define the Zariski tangent space of $V$ at $x$ to be
\begin{equation*}
T_xV = \{v \in K^d \colon v\cdot \nabla f(x)=0\ \textrm{for all}\ f\in I(V)  \}.
\end{equation*} 
 
We always have $\dim(T_xV)\geq \dim(V)$ (in the case $K=\RR$, the dimension of a real variety is slightly subtle; see \cite{BCR} for details). If $\dim(T_xV)\geq \dim(V)$, then $x$ is a singular point of $V$. Let $V_{\sing}$ be the set of singular points of $V$, and let $V_{\reg}=V\backslash V_{\sing}$ be the set of regular points of $V$. See  \cite{Harris} and \cite{BCR} for further background and details.

If $V\subset\CC^d$ is a complex variety, let $V(\RR)\subset\RR^d$ be its real locus. In particular, if $V\subset\CC^d$ is a one-dimensional variety, then $V(\RR)$ is a real variety of dimension at most 1. If $V\subset\RR^d$ is a real variety, let $V^*\subset\CC^d$ be the complexification of $V$---this is the smallest complex variety whose real locus contains $V$. If $p\in\RR^d$ is a point, then $p^*\subset\CC^d$ is the image of $p$ under the usual embedding from $\RR^d\to\CC^d$.

Most of our arguments will occur over the reals. However, we will sometimes need to work over $\CC$ in order to make use of the following result of Solymosi and Tao:
\begin{lemma}[Solymosi-Tao \cite{ST}, Corollary 4.5]\label{CoverVarietySmoothPts}
Let $V\subset\CC^d$ be a $k$--dimensional algebraic variety of degree at most $D$. Then one can cover $V$ by $V_{\reg}$ and $O_{D,d}(1)$ sets of the form $W_{\reg}$, where each $W$ is an algebraic variety contained in $V$ of dimension at most $k-1$ and degree $O_{D,d}(1)$.
\end{lemma}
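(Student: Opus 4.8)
The plan is to prove the statement by induction on $k=\dim V$, peeling off the singular locus at each stage. The base case $k=0$ is immediate: a zero-dimensional variety is a finite set of (reduced) points, every one of which is a regular point, so $V=V_{\reg}$ and no auxiliary sets $W_{\reg}$ are needed.

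For the inductive step, assume $k\geq 1$ and that the claim holds for all varieties of dimension at most $k-1$. The set $V_{\sing}$ is Zariski-closed in $V$, and since the regular locus is dense in each irreducible component of $V$, it is a proper subvariety of every component; together with the (pairwise) intersections of distinct components this shows $\dim V_{\sing}\leq k-1$. The key quantitative input is that $\deg V_{\sing}=O_{D,d}(1)$. To see this, decompose $V$ into its at most $D$ irreducible components $V_1,\ldots,V_m$, each of degree at most $D$. Each $(V_j)_{\sing}$ is cut out inside $V_j$ by the $(d-\dim V_j)\times(d-\dim V_j)$ minors of the Jacobian matrix of a suitable system of defining equations of degree $O_{D,d}(1)$; those minors have degree $O_{D,d}(1)$, so by the refined B\'ezout inequality $\deg (V_j)_{\sing}=O_{D,d}(1)$, and likewise $\deg(V_i\cap V_j)\leq \deg V_i\deg V_j\leq D^2$. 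Since $V_{\sing}$ is contained in the union of the $(V_j)_{\sing}$ and the $V_i\cap V_j$, we get $\deg V_{\sing}=O_{D,d}(1)$. (This effective bound on the degree of the singular locus of a bounded-degree variety is classical; see the effective-elimination literature.)

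Now set $W_0=V_{\sing}$, so $W_0\subseteq V$, $\dim W_0\leq k-1$, $\deg W_0=O_{D,d}(1)$, and $V=V_{\reg}\cup (W_0)_{\reg}\cup (W_0)_{\sing}$. Applying the induction hypothesis to $W_0$ covers $W_0$ — hence in particular $(W_0)_{\sing}$ — by $(W_0)_{\reg}$ together with $O_{D,d}(1)$ sets $W_{\reg}$, where each $W\subseteq W_0\subseteq V$ has dimension at most $k-2\leq k-1$ and degree $O_{D,d}(1)$. Combining, $V$ is covered by $V_{\reg}$, by $W_0$, and by these finitely many further $W$'s — a total of $O_{D,d}(1)$ sets of the required form, each a subvariety of $V$ of dimension $\leq k-1$ and degree $O_{D,d}(1)$. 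Tracking constants, if $N(k,D)$ is the number of auxiliary sets needed then $N(k,D)\leq 1+N(k-1,D')$ for some $D'=O_{D,d}(1)$, and since the recursion has depth at most $d$ we obtain $N(k,D)=O_{D,d}(1)$. (If one wants each $W$ irreducible, simply replace $W_0$ by its $O_{D,d}(1)$ components, each of degree $O_{D,d}(1)$, before recursing; the bounds are unaffected.)

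The main obstacle is the degree bound $\deg V_{\sing}=O_{D,d}(1)$; the rest is a clean dimension induction. The subtlety there is that a degree-$D$ variety need not have its ideal generated in degree $\leq D$, so one must either invoke the fact that $V$ is an irreducible component of the common zero set of polynomials of degree $O_{D,d}(1)$ (and use those to compute Zariski tangent spaces correctly) or argue component-by-component with a controlled set of local equations; once that is set up, the Jacobian-minor description of $V_{\sing}$ together with B\'ezout closes the argument. Reducibility and possible non-equidimensionality of $V$ require only mild care, since pairwise intersections of distinct components are again proper subvarieties of bounded degree and are therefore absorbed into $V_{\sing}$ with the right dimension and degree bounds.
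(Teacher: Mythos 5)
The paper does not prove this lemma; it is quoted verbatim from Solymosi--Tao \cite{ST}, and your argument is essentially the standard one used there: induct on dimension, peel off the singular locus, and control its degree via Jacobian minors and B\'ezout. Your sketch is correct, and you rightly isolate the only real technical point (that the singular locus of a degree-$D$ variety is itself a variety of degree $O_{D,d}(1)$, which requires working with a bounded-degree generating set of the ideal, or at least equations whose Jacobian detects smoothness, rather than arbitrary set-theoretic equations); since that effective bound is classical, nothing is missing.
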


Finally, if $P\in\RR[x_1,\ldots,x_d]$ is a polynomial, define $\BZ(P)=\{x\in\RR^d\colon P(x)=0\}$. 
\subsection{The discrete polynomial partitioning theorem}
We will make crucial use of the discrete polynomial partitioning theorem of Guth and Katz:
\begin{theorem}[Guth-Katz, \cite{Guth}, Theorem 4.1]\label{th:partition}
Let $\pts$ be a set of $n$ points in $\RR^d$. For each $D\geq 1$, there exists a polynomial $P$ of degree $\leq D$ so that $\RR^{d}\backslash\BZ(P)$ is a union of $O_{d}(D^d)$ connected components (cells), and each cell contains $O_d(nD^{-d})$ points of $\pts$.
\end{theorem}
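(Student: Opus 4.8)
The plan is to deduce this from the polynomial ham-sandwich theorem by a dyadic iteration. First I would record the polynomial ham-sandwich theorem: the space of real polynomials on $\RR^d$ of degree $\leq D$ has dimension $M=\binom{D+d}{d}$, and the degree-$\leq D$ Veronese embedding $\RR^d\to\RR^{M-1}$ turns the vanishing locus of such a polynomial into the preimage of a hyperplane. The classical Borsuk--Ulam ham-sandwich theorem then shows that any $M-1$ finite subsets of $\RR^d$ can be simultaneously bisected by a hypersurface $\BZ(Q)$ with $\deg Q\leq D$, where ``bisect'' means each of $\{Q>0\}$ and $\{Q<0\}$ contains at most half of each set. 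Since $M-1\gtrsim_d D^d$, a single polynomial of degree $\leq D$ can bisect $\gtrsim_d D^d$ sets; equivalently, bisecting $N$ sets only costs degree $\lesssim_d N^{1/d}$.

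Next I would iterate. Let $\mathcal S_0=\{\pts\}$. Having produced a family $\mathcal S_{i-1}$ of $2^{i-1}$ subsets of $\pts$, each of size $\leq n\,2^{-(i-1)}$, I would use polynomial ham-sandwich to find $P_i$ with $\deg P_i\lesssim_d (2^{i-1})^{1/d}$ that simultaneously bisects every set in $\mathcal S_{i-1}$; discarding the points lying on $\BZ(P_i)$ and splitting each remaining set according to the sign of $P_i$ produces $\mathcal S_i$, a family of $2^i$ subsets, each of size $\leq n\,2^{-i}$. After $j$ steps set $P=P_1P_2\cdots P_j$; then $\deg P\lesssim_d\sum_{i=1}^j 2^{(i-1)/d}\lesssim_d 2^{j/d}$, since the geometric series is dominated up to a $d$-dependent constant by its last term.

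Now choose $j$ so that $2^{j/d}$ is comparable to $D$, i.e.\ $2^j\asymp_d D^d$, adjusting constants so that $\deg P\leq D$ (for small $D$ the theorem is trivial with a large implied constant). On any connected component of $\RR^d\setminus\BZ(P)$ none of the factors $P_i$ vanishes, so each $P_i$ has a constant sign there; hence the component lies in a single common sign region of $P_1,\dots,P_j$ and therefore contains at most $n\,2^{-j}=O_d(nD^{-d})$ points of $\pts$. Finally, the number of connected components of $\RR^d\setminus\BZ(P)$ for $\deg P\leq D$ is $O_d(D^d)$ by the Milnor--Thom / Ole{\u\i}nik--Petrovski{\u\i} bound on the Betti numbers of a real algebraic set. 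This yields both assertions.

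I expect the main obstacle to be the two classical inputs rather than the iteration itself: the polynomial ham-sandwich theorem (which ultimately rests on Borsuk--Ulam) and, more seriously for a self-contained treatment, the Milnor--Thom estimate that a degree-$D$ hypersurface in $\RR^d$ cuts space into only $O_d(D^d)$ pieces --- the dyadic argument by itself only controls the $2^j$ sign regions, and one needs the component bound to prevent a single sign region from being split into many cells. A minor point to handle carefully is the bookkeeping of the points discarded onto the successive zero sets $\BZ(P_i)$, but since such points simply fail to lie in any cell of the final partition this causes no difficulty.
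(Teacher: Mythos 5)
The paper does not prove this theorem; it cites it as a black box (Guth–Katz, ``On the Erd\H{o}s distinct distances problem in the plane,'' Theorem 4.1, via the reference \cite{Guth}). Your proposal correctly reproduces the original Guth--Katz argument: polynomial ham--sandwich (via the Veronese embedding and Borsuk--Ulam) to bisect the dyadic family at each stage, a geometric-series bound giving $\deg P \lesssim_d 2^{j/d}$, the observation that each connected component of $\RR^d\setminus\BZ(P)$ lies in a single common sign region of the factors $P_1,\dots,P_j$, and the Milnor--Thom/Ole\u{\i}nik--Petrovski\u{\i} bound to control the number of components. You also correctly flag that the sign-region count alone does not bound the number of cells (a sign region may be disconnected), so Milnor--Thom is genuinely needed, and that points falling on $\BZ(P_i)$ at intermediate stages are harmless since they lie on $\BZ(P)$ and hence in no cell. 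The argument is sound; since the paper takes the result off the shelf, there is no in-paper proof to compare against.
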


\subsection{Connected components of real varieties}
The discrete polynomial partitioning theorem will be used to partition the point-line arrangement (technically, the point-two--flat arrangement) into connected components, which are called cells. Each point lies in at most one cell. While each two--flat can enter several cells, the following theorem controls how many cells a given two--flat can enter.
\begin{theorem}[Barone-Basu \cite{BB}, special case]\label{BBThm}
Let $Y\subset\RR^d$ be a real variety of dimension $e$ that can be defined by polynomials of degree at most $D_1$, and let $Z\subset\RR^d$ be a real variety that can be defined by polynomials of degree at most $D_2$. Then $Y\backslash Z$ contains $O_{d,D_1}(D_2^e)$ connected components. 
\end{theorem}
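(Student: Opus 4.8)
The plan is to reprove this as an instance of the Oleinik--Petrovsky--Milnor--Thom method for bounding the number of connected components of a real algebraic set, using the degree book-keeping of Barone and Basu that produces the exponent $e=\dim Y$ rather than the ambient dimension $d$. First replace $Z$ by a single defining polynomial: if $Z=\BZ(g_1,\dots,g_m)$ with $\deg g_j\le D_2$, set $g=g_1^2+\dots+g_m^2$, so that $g\ge 0$, $\deg g\le 2D_2$, $\BZ(g)=Z$, and $Y\setminus Z=\{x\in Y: g(x)>0\}$. Next stratify $Y$, as a real algebraic (equivalently semialgebraic) set, into finitely many smooth semialgebraic manifolds $S_0,S_1,\dots,S_e$ with $\dim S_i=i$, each contained in $W_{\reg}$ for some real variety $W_i\supseteq S_i$ of dimension $i$ defined by polynomials of degree $O_{d,D_1}(1)$; such a stratification with uniformly bounded degrees exists by standard real-algebraic-geometry arguments. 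Since the sets $S_i\setminus Z$ partition $Y\setminus Z$ into locally closed pieces, every connected component of $Y\setminus Z$ contains a connected component of some $S_i\setminus Z$, so
\[
b_0(Y\setminus Z)\le\sum_{i=0}^{e}b_0\big(\{x\in S_i:g(x)>0\}\big),
\]
and it suffices to show each summand is $O_{d,D_1}(D_2^i)$; summing then gives $O_{d,D_1}(D_2^e)$.

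Fix a generic $p\in\RR^d$ and put $\rho_p(x)=|x-p|^2$. By the local conic structure of semialgebraic sets at infinity we may replace $\{x\in S_i:g(x)>0\}$ by its intersection with a large ball $B_R$ ($R$ generic) without changing $b_0$, and using the increasing union $\{g>0\}=\bigcup_{\delta>0}\{g\ge\delta\}$ (or an infinitesimal $\delta$ in the sense of Basu--Pollack--Roy) we may further replace $\{g>0\}$ by the closed set $\{g\ge\delta\}$ for generic small $\delta>0$ without decreasing the count. On the resulting compact set $\rho_p$ attains its minimum on each connected component, and that minimizer is either an interior point of $S_i$ --- hence a critical point of $\rho_p|_{S_i}$ --- or a point of $S_i\cap\{g=\delta\}$ or of $S_i\cap\partial B_R$, each a smooth manifold of dimension $i-1$ for generic $\delta,R$, where it is a critical point of the restriction of $\rho_p$ to that manifold. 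Hence
\[
b_0\big(\{x\in S_i:g(x)>0\}\big)\le \#\mathrm{Crit}(\rho_p|_{S_i})+\#\mathrm{Crit}\big(\rho_p|_{S_i\cap\{g=\delta\}}\big)+b_0\big(\{x\in S_i\cap\partial B_R:g(x)>0\}\big),
\]
where the last term is $O_{d,D_1}(D_2^{i-1})$ by induction on dimension.

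It remains to estimate the two critical-point counts by B\'ezout, and this is where the exponent comes out as $i$ rather than $d$. Cover $W_{\reg}\supseteq S_i$ by $O_{d,D_1}(1)$ pieces on each of which a fixed set of $d-i$ defining polynomials $P_{j_1},\dots,P_{j_{d-i}}$ of $W_i$ has linearly independent gradients. On such a piece, $x$ is critical for $\rho_p|_{S_i}$ iff $\nabla\rho_p(x)=2(x-p)$ lies in $\operatorname{span}\{\nabla P_{j_1}(x),\dots,\nabla P_{j_{d-i}}(x)\}$, i.e.\ a $(d-i+1)\times d$ matrix has rank $\le d-i$; together with the $d-i$ equations $P_{j_\ell}=0$ this is cut down to dimension $0$ by $i$ suitably chosen $(d-i+1)\times(d-i+1)$ minors, each of degree $\le 1+(d-i)(D_1-1)$ --- with \emph{no dependence on $D_2$} --- so B\'ezout yields $\#\mathrm{Crit}(\rho_p|_{S_i})=O_{d,D_1}(1)$. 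For $\rho_p|_{S_i\cap\{g=\delta\}}$ the span additionally contains $\nabla g(x)$, so one works with $(d-i+2)\times(d-i+2)$ minors; each such minor has a single row of degree $\le 2D_2-1$ (coming from $\nabla g$) and the remaining rows of degree $O_{d,D_1}(1)$, hence degree $\le 2D_2+(d-i)(D_1-1)$, which is \emph{linear} in $D_2$; the resulting $0$-dimensional system consists of the $d-i$ equations $P_{j_\ell}=0$ (degree $\le D_1$), the equation $g=\delta$ (degree $\le 2D_2$), and $i-1$ such minors, so B\'ezout gives $O_{d,D_1}(D_2\cdot D_2^{i-1})=O_{d,D_1}(D_2^i)$. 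Thus each summand is $O_{d,D_1}(D_2^i)$, completing the induction.

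The step I expect to be the main obstacle is making the reductions of the second paragraph fully rigorous: passing from the open sign condition $\{g>0\}$ on a stratum to a compact generic model while controlling $b_0$, producing a stratification of $Y$ into smooth manifolds of \emph{uniformly} bounded degree, and the Sard-type genericity ensuring that $\rho_p$ and its restrictions to the relevant level sets are Morse with isolated critical points. These are all standard in semialgebraic geometry --- this is, in essence, the content of \cite{BB} together with the Basu--Pollack--Roy framework --- which is why in this note we simply invoke Theorem~\ref{BBThm}.
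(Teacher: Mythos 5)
The paper does not prove this statement: it is quoted directly from Barone--Basu \cite{BB}, and the only commentary offered is the remark that the case where $Y$ is a two-flat already follows from Milnor--Thom. So there is no proof in the paper to compare against; the question is whether your sketch would stand on its own. Your strategy is the right one --- the Oleinik--Petrovsky--Milnor--Thom critical-point method --- and you correctly isolate the one place where $D_2$ enters the B\'ezout count (the single equation $g=\delta$ of degree $\le 2D_2$ and the single row $\nabla g$ in the minors, against a background of $d-i$ equations and $i-1$ rows of degree $O_{d,D_1}(1)$), which is exactly why the exponent is $e$ rather than $d$.

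Two steps, however, are genuine gaps rather than routine omissions. First, the compactness step fails as written: a stratum $S_i$ is only locally closed, so $S_i\cap\{g\ge\delta\}\cap\overline{B_R}$ need not be compact, and the minimizer of $\rho_p$ on a connected component can escape into $\overline{S_i}\setminus S_i$, which lies in lower-dimensional strata; your inequality bounding $b_0$ of a stratum by critical points \emph{on that stratum} plus boundary terms is therefore not justified. This is precisely why the actual Barone--Basu argument does not stratify and treat each stratum separately, but instead runs a dimension induction in which the variety is replaced by an infinitesimal deformation that is a smooth, bounded complete intersection over a real closed extension, so that the relevant critical points genuinely exist on the deformed object. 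Second, the claim that the rank condition is cut down to dimension zero by $i$ (resp.\ $i-1$) ``suitably chosen'' minors needs a Sard-type genericity argument in $p$, $\delta$, $R$ and in the choice of minors: the determinantal locus is defined by \emph{all} maximal minors, can a priori have excess or positive-dimensional components, and B\'ezout only counts isolated solutions. You flag both issues yourself, and they are indeed resolved in \cite{BB} and the Basu--Pollack--Roy framework, but as written the proposal is a correct blueprint rather than a complete proof; citing the theorem, as the paper does, is the appropriate move.
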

\begin{rem}
In the special case where $Y\subset\RR^d$ is a two--flat, the above result also follows from the Milnor-Thom theorem. However, we will also be interested in the case where $Y$ is defined by polynomials of larger (though still bounded) degree. 
\end{rem}

\section{Proof of Lemma \ref{mainLem}, Step 1: Lines in $\CC^d$}
\begin{proof}[Proof of Lemma \ref{mainLem}]
We prove the lemma by induction on $n_1+\ell_1$. First, note that $I(\pts_1,\lines_1)\leq n_1^2+\ell_1$. Thus if we select $C^{\prime}_{d,\epsilon}$ sufficiently large, we can assume that $\ell_1<\rho n_1^{\frac{2}{d+1}}\ell_1^{\frac{d}{d+1}}$; we can make $\rho>0$ arbitrarily small by making the constant $C^{\prime}_{d,\epsilon}$ larger.

Let $P$ be a partitioning polynomial in $\RR^{2d}$ of degree $D$ adapted to the set $\iota(\pts_1)\subset\RR^{2d}$, as given by Theorem \ref{th:partition}; there are $O_d(D^{2d})$ cells. We will choose $D$ later, and it will depend only on $d$ and $\epsilon$. For each cell $\Omega,$ let $\ell_\Omega$ be the number of lines $L\in\lines_1$ so that $\iota(L)\cap\Omega\neq\emptyset$. By Theorem \ref{BBThm}, we have
\begin{equation}
\sum_{\Omega}\ell_\Omega = O_d(D^2\ell_1).
\end{equation}

Apply the induction hypothesis inside each cell. Either property (A) holds inside some cell, or we have
\begin{equation}\label{inductionCalc}
\begin{split}
I(\iota(\pts_1)\backslash \BZ(P),\iota(\mathcal L_1)) &=\sum_\Omega I\big(\iota(\pts) \cap\Omega,\ \iota(\lines_1)\big)\\
&\leq \sum_{\Omega}  C^{\prime}_{d,\epsilon} \big( O_d(n_1D^{-2d})^{\frac{2+\epsilon}{d+1}}\ell_\Omega^{\frac{d}{d+1}}+|\iota(\pts_1)\cap\Omega|+\ell_\Omega\big)\\
&\leq C^{\prime}_{d,\epsilon} \big( O_d(1)D^{-\frac{2d\epsilon}{d+1}}n_1^{\frac{2+\epsilon}{d+1}}\ell_1^{\frac{d}{d+1}} + |\iota(\pts_1)\backslash Z(P)|+O_d(D^2\ell_1)\big)\\
&\leq C^{\prime}_{d,\epsilon} \big(\ (O_d(1)D^{-\frac{2d\epsilon}{d+1}})\ n_1^{\frac{2+\epsilon}{d+1}}\ell_1^{\frac{d}{d+1}} + |\iota(\pts_1)\backslash Z(P)|+\frac{1}{4}n_1^{\frac{2}{d+1}}\ell_1^{\frac{d}{d+1}}\big)\\
&\leq C^{\prime}_{d,\epsilon} \big(\frac{1}{2}n_1^{\frac{2+\epsilon}{d+1}}\ell_1^{\frac{d}{d+1}} + |\iota(\pts_1)\backslash Z(P)|\big).
\end{split}
\end{equation}
Here we selected $D$ sufficiently large (depending on $d$ and $\epsilon$) so that the term $O_d(1) D^{-\frac{2d\epsilon}{d+1}}$ is at most $1/4$, and we selected $C^{\prime}_{d,\epsilon}$ sufficiently large (depending on $D$, $d$, and $\epsilon$, which in turn depend only on $d$ and $\epsilon$) to guarantee that the term $O_d(D^2\ell_1)$ is at most $\frac{1}{4}n_1^{\frac{2}{d+1}}\ell_1^{\frac{d}{d+1}}$.

Applying Lemma \ref{CoverVarietySmoothPts} to $\BZ(P)^*$, we can find a collection $\mathcal{V}$ of $O_{d,D}(1)$ complex varieties in $\CC^{2d}$ and sets $\{\pts_V\}_{V\in\mathcal{V}}$ so that the following properties hold:
\begin{itemize}
 \item Each variety in $\mathcal{V}$ is of degree $O_{d,D}(1)$.
 \item For each $V\in\mathcal{V}$, $\pts_V\subset \iota(\pts)\cap V(\RR)$.
 \item The sets $\{\pts_V\}_{V\in\mathcal{V}}$ are disjoint, and $\bigcup_{V\in\mathcal{V}}\pts_V=\iota(\pts)\cap\BZ(P)$.
 \item For each $p\in\pts_V$, $p^*$ is a regular point of $V$.
\end{itemize}

For each $V\in\mathcal{V}$, let 
\begin{equation*}
\mathcal{L}_V=\{L\in\lines_1\colon(\iota(L))^*\subset V\}. 
\end{equation*}

One of the following must hold:
\begin{enumerate}
\item[(A.1)] There is a variety $V\in\mathcal{V}$ and a point $p\in\pts_V$ so that
\begin{equation}\label{manyFlatsInAVariety}
|\{L\in \mathcal{L}_V\colon p\in L\}|\geq\frac{1}{2}|\{L\in\lines_1\colon p\in L\}|.
\end{equation}
\item[(B.1)] \eqref{manyFlatsInAVariety} fails for every $V\in\mathcal{V}$ and every $p\in\pts_V$.
\end{enumerate}

Suppose (A.1) holds for some variety $V$ and some point $p\in\pts_V$. Let $\Pi_0\subset\RR^{2d}$ be a $(2d-1)$--flat containing $(T_{p^*}V)(\RR)$. If $\iota(L)^*\subset V$ and $p\in L$, Then $L\subset \Pi_0$. Thus $\iota(L)\subset \Pi_0$ for at least half of the complex lines in $\{L\in\mathcal{L}_1\colon p\in L\}$. By Lemma \ref{vectorsTrappedInRealSpaceImpliesTrappedInComplex}, there exists a complex $(d-1)$-flat $\Pi$ so that at least half the complex lines in $\{L\in \mathcal{L}_1\colon p\in L\}$ are contained in $\Pi$. We conclude that (A) holds, provided $c^\prime_{d,\epsilon}\geq 1/2$.

Now suppose (B.1) holds. We will consider each $V\in\mathcal{V}$ in turn. If (A) holds for some point $p\in \pts_V$ then we are done. If (A) fails for every point $p\in \pts_V$, then we will establish the bound
\begin{equation}\label{boundPtsVIncidences}
 I(\pts_V, \iota(\lines_1))\leq C_{d,\epsilon}^{\prime\prime}\big(n_1^{\frac{2+\epsilon}{d+1}}\ell_1^{\frac{d}{d+1}}+|\pts_V|+\ell_1\big).
\end{equation}
Since there are $O_{d,\epsilon}(1)$ elements of $\mathcal{V}$, if we establish \eqref{boundPtsVIncidences} for each $V\in\mathcal{V}$ then we will have proved the bound
\begin{equation}
I(\pts\cap Z(P), \iota(\lines_1))\leq C_{d,\epsilon}^{\prime}\big(\frac{1}{2}n_1^{\frac{2+\epsilon}{d+1}}\ell_1^{\frac{d}{d+1}}+|\pts\cap Z(P)|+\ell_1\big),
\end{equation}
provided $C_{d,\epsilon}^{\prime}$ is chosen sufficiently large (depending only on $d$ and $\epsilon$), and this will conclude the proof of Lemma \ref{mainLem}. It remains to prove \eqref{boundPtsVIncidences} for each $V\in\mathcal{V}$; recall that by assumption, (B.1) holds and (A) fails.

Since (A) fails, by Lemma \ref{dMOneRealDimInComplex}, for each $p\in\pts_V$ and each $(d-1)$--flat $\Pi\subset\RR^{2d}$ containing $v$, we have
\begin{equation*}
|\{L\in \mathcal{L}_1\colon p\in L,\ \dim(\Pi\cap \iota(L))\geq 1\}|\leq c^\prime_{d,\epsilon}|\{L\in \mathcal{L}_1\colon p\in L\}|.
\end{equation*}
Since $\dim\big(\Pi\cap \iota(L)\big)=\dim\big(\Pi^*\cap \iota(L)^*\big)$ (where the left $\dim$ is the real dimension of the real algebraic set, and the right $\dim$ is the complex dimension of the complex algebraic set), we have
\begin{equation*}
|\{L\in \mathcal{L}_1\colon p\in L,\ \dim\big(\Pi^*\cap \iota(L)^*\big)\geq 1\}|\leq c^\prime_{d,\epsilon}|\{L\in \mathcal{L}_1\colon p\in L\}|,
\end{equation*}
and thus
\begin{equation}\label{smallFractionOfTangenciesInDmOnePlane}
|\{L\in \mathcal{L}_1\colon p\in L,\ \dim\big(\Pi^*\cap T_p(\iota(L)^*\cap V)\big)\geq 1\}|\leq c^\prime_{d,\epsilon}|\{L\in \mathcal{L}_1\colon p\in L\}|,
\end{equation}
where $T_p(\iota(L)^*\cap V)\subset\CC^{2d}$ is the Zariski tangent space of the (complex) variety $\iota(L)^*\cap V$ at $p$.

Let $\pi\colon\RR^{2d}\to\RR^d$ be a generic (with respect to $\pts$, $\mathcal{L}$, and $V$) linear transformation. $\pi$ also extends to a map from $\CC^{2d}\to\CC^d$. Since the projection of a linear algebraic variety cannot increase its dimension (this is actually true for all algebraic varieties, though we don't need this fact), we have 
\begin{equation*}
|\{L\in \mathcal{L}_1\colon p\in L,\ \dim\big(\pi(\Pi^*)\cap \pi(T_p(\iota(L)^*\cap V))\big)\geq 1\}|\leq c^\prime_{d,\epsilon}|\{L\in \mathcal{L}_1\colon p\in L\}|
\end{equation*}
for every $(d-1)$--flat $\Pi\subset\RR^{2d}$ containing $p$. This implies that
\begin{equation}\label{notTooManyVectorsInLinSpace}
|\{L\in \mathcal{L}_1\colon p\in L,\ \dim\big(\Pi^*\cap \pi(T_p(\iota(L)^*\cap V))\big)\geq 1\}|\leq c^\prime_{d,\epsilon}|\{L\in \mathcal{L}_1\colon p\in L\}|
\end{equation}
for every $(d-1)$--flat $\Pi\subset\RR^d$ containing $p$.

Let $\Gamma_0$ be the set of irreducible components of $\pi\big(\iota(L)^*\cap V\big)$ as $L$ ranges over the set $\lines_1\backslash \mathcal{L}_V$. We have
\begin{equation}\label{bdOnGamma0}
|\Gamma_0|=O_{d,D}(\ell_1)=O_{d,\epsilon}(1).
\end{equation}
Furthermore, if $L\in\lines_1$, then $O_{d,\epsilon}(1)$ curves from $\Gamma_0$ can be contained in $\pi(\iota(L)^*)$ (this is because the projection $\pi$ was chosen generically with respect to $\lines$). Since there is at most one complex line from $\lines_1$ passing through any two points in $\CC^d,$ there are at most $O_{d,\epsilon}(1)$ curves from $\Gamma_0$ passing through any two points in $\CC^{2d}$.

Let $\pts^\prime=\pi(\iota(\pts_V))$; technically $\pts^\prime\subset\CC^d$, but the points in $\pts^\prime$ are real, i.e.~all coordinates are real. As noted above, the arrangement $(\pts^\prime,\Gamma_0)$ has 2 degrees of freedom and multiplicity type $O_{d,\epsilon}(1)$---this means that any two curves intersect in at most $O_{d,\epsilon}(1)$ points, and if we fix two points from $\pts^\prime$ then at most $O_{d,\epsilon}(1)$ curves pass through both of them.  

We will need the following lemma.

\begin{lemma}\label{curvesResult}
Let $\pts_2\subset\RR^d$ be a collection of $n_2$ points. Let $\Gamma_0$ be a collection of $\ell_2$ irreducible complex curves in $\CC^d$, each of degree at most $C$. Let $\Gamma=\{\alpha(\RR)\colon\alpha\in\Gamma_0\}$. Suppose that $(\pts_2,\Gamma)$ has two-degrees of freedom and multiplicity type $s$. Then at least one of the following must hold.
\begin{enumerate}
\item[(A.2)]  there is a point $p\in\pts_2$ and a $(d-1)$--flat $\Pi\subset\RR^d$ containing $p$ so that
\begin{equation}\label{manyCurveTangentToAPlane}
|\{\gamma\in\Gamma\colon p\in\gamma,\ \dim(T_p\gamma\cap\Pi)\geq 1\}|\geq\frac{1}{2}|\{\gamma\in\Gamma\colon p\in\gamma\}|,
\end{equation}
where $T_p\gamma$ is the (real) Zariski tangent space of $\gamma$ at $p$ (see Section \ref{realCplxVarSec} for a definition of the Zariski tangent space).
\item[(B.2)]\itemizeEqnVSpacing
\begin{equation}\label{IncidenceBdCurves}
I(\pts_2,\Gamma)\leq C^{\prime\prime}_{\epsilon}(n_2^{\frac{2+\epsilon}{d+1}}\ell_2^{\frac{d}{d+1}}+n_2+\ell_2),
\end{equation}
where the constant $C^{\prime\prime}_{\epsilon}$ depends on $\epsilon,$ $d$, $C,$ and $s$.
\end{enumerate}
\end{lemma}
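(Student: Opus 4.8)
The plan is to follow the template of the proof of Lemma~\ref{mainLem}, with the real loci of complex curves in $\RR^d$ playing the role of the two--flats in $\RR^{2d}$, and with the linear--algebra Lemmas~\ref{vectorsTrappedInRealSpaceImpliesTrappedInComplex}--\ref{dMOneRealDimInComplex} replaced by the elementary fact that at a regular point the Zariski tangent space of a subvariety is contained in that of the ambient variety. I would induct on $n_2+\ell_2$. As a preliminary reduction, a standard double count over pairs of points of $\pts_2$, using that $(\pts_2,\Gamma)$ has multiplicity type $s$, gives $I(\pts_2,\Gamma)\le \ell_2+s\,n_2^2$, so if $\ell_2\ge s\,n_2^2$ then \eqref{IncidenceBdCurves} holds at once; otherwise, once $C''_\epsilon$ is taken large, any term of the shape $O_{d,C,s}(D)\,\ell_2$ is negligible next to $n_2^{\frac{2}{d+1}}\ell_2^{\frac{d}{d+1}}$. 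The cases $n_2\le 1$, $\ell_2\le 1$, and $d=1$ are trivial and form the base of the induction.

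Next I would apply Theorem~\ref{th:partition} to $\pts_2\subset\RR^d$ with a degree $D$ depending only on $d,\epsilon,C,s$, producing $O_d(D^d)$ cells each containing $O_d(n_2 D^{-d})$ points. Each $\gamma\in\Gamma$ is a real variety of dimension at most one and bounded degree, so by Theorem~\ref{BBThm} (with $Z=\BZ(P)$) it meets $O_{d,C}(D)$ cells, whence $\sum_\Omega\ell_\Omega=O_{d,C}(D\ell_2)$, where $\ell_\Omega$ is the number of curves meeting the cell $\Omega$. Inside each cell the arrangement still has two degrees of freedom and multiplicity type $s$ and strictly fewer points and curves, so by the induction hypothesis either (A.2) holds inside some cell --- hence globally, since every curve of $\Gamma$ through a point of $\Omega$ meets $\Omega$ --- or, summing \eqref{IncidenceBdCurves} over the cells exactly as in \eqref{inductionCalc} (H\"older on $\sum_\Omega\ell_\Omega^{d/(d+1)}$, then $D$ large enough that the resulting power $D^{-d\epsilon/(d+1)}$ is small, then $C''_\epsilon$ large enough to absorb $O_{d,C}(D\ell_2)$), one gets
\[
I\big(\pts_2\setminus\BZ(P),\ \Gamma\big)\ \le\ C''_\epsilon\Big(\tfrac12\,n_2^{\frac{2+\epsilon}{d+1}}\ell_2^{\frac{d}{d+1}}+\big|\pts_2\setminus\BZ(P)\big|\Big).
\]

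It remains to bound the incidences on $\BZ(P)$, and here I would complexify. Applying Lemma~\ref{CoverVarietySmoothPts} to $\{P=0\}\subset\CC^d$ yields a finite family $\mathcal V$ of complex varieties of dimension $\le d-1$ and degree $O_{d,D}(1)$, with disjoint sets $\pts_V\subset\pts_2\cap V(\RR)$ whose union is $\pts_2\cap\BZ(P)$ and with $p^*\in V_{\reg}$ for all $p\in\pts_V$. Fix $V$. Either (A.3): there is $p\in\pts_V$ so that at least half the curves $\alpha(\RR)$ through $p$ have $\alpha\subset V$ and $T_p(\alpha(\RR))\neq\{0\}$; or (B.3): this fails for every $p\in\pts_V$. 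If (A.3) holds, then for each such $\alpha$ every polynomial vanishing on $V$ vanishes on $\alpha$, so $T_{p^*}\alpha\subset T_{p^*}V$; since $p^*$ is a regular point of $V$, $\dim_\CC T_{p^*}V\le d-1$, and comparing gradients of the real and imaginary parts of polynomials vanishing on $\alpha$ shows $T_p(\alpha(\RR))\subset(T_{p^*}\alpha)\cap\RR^d\subset(T_{p^*}V)\cap\RR^d=:W$, a real subspace of dimension $\le d-1$; taking any $(d-1)$--flat $\Pi\supset W$ gives $\dim(T_p\gamma\cap\Pi)=\dim T_p\gamma\ge 1$ for each of these $\gamma=\alpha(\RR)$, which is (A.2). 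If (B.3) holds, then for every $p\in\pts_V$ more than half the curves $\alpha(\RR)$ through $p$ satisfy $\alpha\not\subset V$ or $T_p(\alpha(\RR))=\{0\}$, so
\[
I(\pts_V,\Gamma)\ \le\ 2\sum_{p\in\pts_V}\Big(\#\{\alpha(\RR)\ni p:\alpha\not\subset V\}+\#\{\alpha(\RR)\ni p:T_p(\alpha(\RR))=\{0\}\}\Big);
\]
for $\alpha\not\subset V$ the set $\alpha\cap V$ is a proper closed subset of the irreducible curve $\alpha$, hence finite of size $O_{d,C,D}(1)$ by B\'ezout, so $\alpha(\RR)$ carries $O_{d,C,D}(1)$ points of $\pts_V\subset V(\RR)$ and the first sum is $O_{d,C,D}(\ell_2)$, while any point with vanishing tangent space is an isolated point of $\alpha(\RR)$, and a degree--$O_C(1)$ curve has only $O_C(1)$ of those, so the second sum is $O_C(\ell_2)$. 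Thus $I(\pts_V,\Gamma)=O_{d,C,D}(\ell_2)$ under (B.3). Summing over the $O_{d,D}(1)$ varieties $V$ and combining with the cell bound gives \eqref{IncidenceBdCurves} for a suitable $C''_\epsilon$, unless (A.2) was produced along the way.

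I expect the passage to incidences on $\BZ(P)$ to be the delicate part: the dichotomy (A.3)/(B.3) must be arranged so that under (B.3) each on--$\BZ(P)$ incidence is charged either to a curve that leaves the cover variety (cheap by B\'ezout) or to an isolated real point of a curve (cheap because a bounded--degree curve has boundedly many), while under (A.3) the conclusion is genuinely the real tangency statement (A.2). The friction is that the real locus $\alpha(\RR)$ of a $\CC$--irreducible curve need not be smooth, or even pure dimensional, where $\alpha$ is, so everything must be phrased through the real Zariski tangent space $T_p(\alpha(\RR))$ and its comparison with $T_{p^*}\alpha$, and the degenerate curves (empty or zero--dimensional real locus, isolated real points) must be shown to contribute negligibly to the incidence count. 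The remaining bookkeeping --- the choice of $D$, then of the reduction threshold, then of $C''_\epsilon$, and the H\"older step in the cell estimate --- is routine and parallels \eqref{inductionCalc}.
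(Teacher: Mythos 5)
Your proposal follows the same overall architecture as the paper's proof: induction on $n_2+\ell_2$, a degree-$D$ partitioning polynomial in $\RR^d$, Barone--Basu to bound $\sum_\Omega\ell_\Omega$, the H\"older/cell computation exactly as in \eqref{inductionCalc}, and on $\BZ(P)$ the Solymosi--Tao covering Lemma~\ref{CoverVarietySmoothPts} applied to the complexification followed by a B\'ezout count to split each on-boundary incidence into ``curve leaves the stratum'' (cheap) versus ``curve contained in the stratum'' (leading to the tangency statement). So this is the paper's argument, not a different one.

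Where you diverge is in sharpening the dichotomy on the boundary, and it is worth saying that the extra care you take is genuinely needed. The paper's proof concludes from ``at least half the $\alpha\in\Gamma_0$ through $p^*$ lie in $W$'' that $\dim_\CC\bigl(T_{p^*}\alpha\cap T_{p^*}W\bigr)\ge1$ for those $\alpha$, and then immediately declares \eqref{manyCurveTangentToAPlane} for $\Pi\supset(T_{p^*}W)(\RR)$. But \eqref{manyCurveTangentToAPlane} is a statement about the \emph{real} tangent space $T_p\gamma$ with $\gamma=\alpha(\RR)$, and the implication requires $T_p\gamma\ne\{0\}$, i.e.\ that $p$ is not an isolated point of $\alpha(\RR)$ --- which can fail even at a regular point of $\alpha$ (e.g.\ $\alpha=\{w=iz\}\subset\CC^2$ has real locus a single point). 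Your dichotomy (A.3)/(B.3) explicitly carries the condition $T_p(\alpha(\RR))\ne\{0\}$, routes the complementary curves to the ``cheap'' side, and bounds them by the observation that a real variety cut out by polynomials of degree $O_C(1)$ in $\RR^d$ has $O_{d,C}(1)$ connected components (hence $O_{d,C}(1)$ isolated points), giving an $O_{d,C}(\ell_2)$ contribution. That patches the gap cleanly; just note the bound on isolated points depends on $d$ as well as $C$. The inclusion $T_p(\alpha(\RR))\subset T_{p^*}\alpha\cap\RR^d$ that you invoke is correct: for a real point $p$ and $f\in I(\alpha)$, a real vector $v$ satisfies $v\cdot\nabla f(p^*)=0$ iff $v\cdot\nabla(\operatorname{Re}f)(p)=v\cdot\nabla(\operatorname{Im}f)(p)=0$, and $I(\alpha(\RR))$ contains all such real and imaginary parts.

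One small bookkeeping point: your preliminary reduction (``if $\ell_2\ge s n_2^2$ then (B.2) holds at once'') uses a different threshold than the paper's $\ell_2<\rho\,n_2^{\frac{2+\epsilon}{d+1}}\ell_2^{\frac{d}{d+1}}$, but both serve the same purpose (making $O(D\ell_2)$ absorbable) and either works.
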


To avoid breaking the flow of the argument, we will defer the proof of Lemma \ref{curvesResult} to the next section. 

For each $V\in\mathcal{V}$, apply Lemma \ref{curvesResult} to the collection $(\pts_V,\Gamma_0)$. If (B.2) holds, then
\begin{equation*}
I(\pts_V,\iota(\Gamma_0))\leq C^{\prime\prime}_\epsilon( |\pts_V|^{\frac{2+\epsilon}{d+1}}|\Gamma_0|^{\frac{d}{d+1}}+|\pts_V|+|\Gamma_0|).
\end{equation*}
Recall that the constant $C_{\epsilon}^{\prime\prime}$ depends only on $d,\epsilon,C,$ and $s$. $C$ and $s$ in turn depend on $D$, which depends only on $\epsilon$ and $d$. Thus $C_{\epsilon}^{\prime\prime}$ depends only on $\epsilon$ and $d$. By \eqref{bdOnGamma0}, $ |\Gamma_0| =O_{d,\epsilon}(\ell_1)$. We conclude that 
\begin{equation*}
I(\pts_V,\iota(\mathcal{L}_1\backslash\mathcal{L}_V))\leq C^{\prime\prime}_{d,\epsilon}( n_1^{\frac{2+\epsilon}{d+1}}\ell_1^{\frac{d}{d+1}}+|\pts_V|+\ell_1).
\end{equation*}
Finally, since (A.1) fails, we have 
\begin{equation*}
I(\pts_V,\iota(\mathcal{L}_1))  \leq 2 I(\pts_V,\iota(\mathcal{L}_1\backslash\mathcal{L}_V)),
\end{equation*}
so \eqref{boundPtsVIncidences} holds. Thus if (B.2) holds for each $V\in\mathcal{V}$ then we are done.

Now suppose (A.2) holds for some $V\in\mathcal{V}$ and some point $p_1\in\pts^\prime$, and let $\Pi\subset\RR^d$ be the corresponding $(d-1)$--flat. Then 
\begin{equation*}
|\{\gamma\in\Gamma\colon p\in\gamma,\ \dim(T_p\gamma^*\cap\Pi^*)\geq 1\}|\geq\frac{1}{2}|\{\gamma\in\Gamma\colon p\in\gamma\}|.
\end{equation*}
Since each $L\in\mathcal{L}$ contributes $O_{d,\epsilon}(1)$ curves to $\Gamma$, if we choose the constant $c^\prime_{d,\epsilon}$ sufficiently small (depending only on $d$ and $\epsilon$), then 
\begin{equation*}
|\{L\in\mathcal{L}\colon \dim(T_p \pi(L^*\cap V) \cap\Pi^*)\geq 1\}|\geq c^\prime_{d,\epsilon}|\{L\in\mathcal{L}\colon p\in L\}|,
\end{equation*}
which violates \eqref{notTooManyVectorsInLinSpace}. This concludes the proof of Lemma \ref{mainLem}, modulo the proof of Lemma \ref{curvesResult}.

\end{proof}

\section{Proof of Lemma \ref{mainLem}, Step 2: Proof of Lemma \ref{curvesResult}}
\begin{proof}[Proof of Lemma \ref{curvesResult}]
First, note that $I(\pts,\Gamma)\leq sn_2^2+\ell_2$. Arguing as above, we can assume $\ell_2<\rho n_2^{\frac{2+\epsilon}{d+1}}\ell_2^{\frac{d}{d+1}}$ or \eqref{IncidenceBdCurves} holds immediately; we can make $\rho>0$ smaller by making the constant $C^{\prime\prime}_\epsilon$ larger.

We will prove the statement by induction on $n_2+\ell_2$. Let $P$ be a partitioning polynomial of degree $D$, as given by Theorem \ref{th:partition}; there are $O_d(D^d)$ cells. By Theorem \ref{BBThm}, we have
\begin{equation}
|\{(\gamma,\Omega)\colon \gamma\in\Gamma,\ \Omega\ \textrm{a cell},\ \gamma\cap\Omega\neq\emptyset\}|=O_{d,C}(D \ell_2).
\end{equation}

Apply the induction hypothesis inside each cell. Either there is a point $p\in \pts_2\backslash\BZ(P)$ satisfying property (A.2), or by the same calculation as in \eqref{inductionCalc} we have
\begin{equation}\label{incidencesInsideCellsCalc}
I(\pts_2\backslash \BZ(P),\Gamma)\leq C^{\prime\prime}_\epsilon \big( n_2^{\frac{2+\epsilon}{d+1}}\ell_2^{\frac{d}{d+1}}+n_2\big).
\end{equation}

It remains to count incidences involving points on $\BZ(P)\cap\pts_2.$ Applying Lemma \ref{CoverVarietySmoothPts} to $\BZ_{\RR}(P)^*$, we can find a collection $\mathcal{W}$ of $O_D(1)$ complex algebraic varieties, and a partition $\{\pts_W\}_{W\in\mathcal{W}}$ of $\pts$ so that for each $W\in\mathcal{W},$ $\pts_W\subset W_{\reg}$.

By B\'ezout's theorem (recall, we are currently working with complex curves and varieties), for each $W\in\mathcal{W}$ we have
\begin{equation}\label{curveNotInVariety}
|\{(p,\alpha)\in\pts_W\times\Gamma_0\colon p^*\in\alpha,\ \alpha\not\subset W\}|=O_{C,D}(\ell_2).
\end{equation}

The total contribution from terms of the form \eqref{curveNotInVariety} summed over all $W\in\mathcal{W}$ is $O_{C,D}(\ell_2)$. If we select $C^{\prime\prime}_\epsilon$ large enough, we conclude that either
\begin{equation}\label{incidencesTransverseBoundary}
I(\pts_2\cap \BZ(P),\Gamma)\leq C^{\prime\prime}_\epsilon \ell_2,
\end{equation}
or there is a variety $W\in\mathcal{W}$ and a point $p\in \pts_W$ so that at least half the curves $\{\alpha\in\Gamma_0\colon p^*\in\alpha\}$ are contained in $W$. If the latter happens then
\begin{equation*}
|\{\alpha\in\Gamma_0\colon p^*\in\alpha,\ \dim(T_{p^*}\alpha\cap T_{p^*}(W))\geq 1\}|\geq\frac{1}{2}|\{\alpha\in\Gamma^*\colon p^*\in\alpha\},
\end{equation*}
where $T_{p^*}(W))$ is the (complex) Zariski tangent space of $W$. 

Let $\Pi\subset\RR^d$ be a $(d-1)$--flat containing $(T_{p^*}(W))(\RR)$. Then \eqref{manyCurveTangentToAPlane} holds with this choice of $p$ and $\Pi$. Thus, either (A.2) holds, or combining \eqref{incidencesInsideCellsCalc} and \eqref{incidencesTransverseBoundary} we obtain \eqref{IncidenceBdCurves}, i.e.~(B.2) holds.
\end{proof}

\bibliographystyle{abbrv}
\bibliography{richLinesBiblio}

\end{document}